\pgfplotsset{compat=1.16}
\newcommand{\R}{{\mathbb R}}\newcommand{\N}{{\mathbb N}}
\newcommand{\C}{{\mathbb C}}
\newcommand{\F}{\mathcal{F}}
\def\F{\mathcal{F}}
\def\E{\mathcal{E}}
\def\Im{\mathrm{Im}}
\def\Chi{G}
\def\bu{{\bf u}}
\def\br{{\bf r}}
\def\bv{{\bf v}}
\def\({\left(}
\def\){\right)}
\def\eps{{\varepsilon}}
\def\veps{{\varepsilon}}
\def\Dt{\partial_T}
\let\epsilon\varepsilon
\let\theta\vartheta
\let\hat\widehat
\newtheorem{theorem}{Theorem}[section]\newtheorem{lemma}[theorem]{Lemma}
\newtheorem{corollary}[theorem]{Corollary}
\newtheorem{remark}[theorem]{Remark}
\title{
Validity of Whitham's modulation equations  \\ for dissipative systems with a conservation law\\
{\large -- Phase dynamics in a generalized Ginzburg-Landau system --}}
\author{Tobias Haas\footnotemark[2] \footnotemark[4]
\and Bj\"orn de Rijk\footnotemark[2] \footnotemark[3]
\and Guido Schneider\footnotemark[2] \footnotemark[5]}
\date{\vspace{-2em}}
\begin{document}

\maketitle

\renewcommand{\thefootnote}{\fnsymbol{footnote}}
\footnotetext[2]{Institut f\"ur Analysis, Dynamik und Modellierung, Universit\"at Stuttgart, Pfaffenwaldring 57, 70569 Stuttgart, Germany}
\footnotetext[4]{tobias.haas@mathematik.uni-stuttgart.de}
\footnotetext[3]{bjoern.derijk@mathematik.uni-stuttgart.de}
\footnotetext[5]{guido.schneider@mathematik.uni-stuttgart.de}

\begin{abstract}
It is well-established that Whitham's modulation equations approximate the dynamics of slowly varying periodic wave trains in dispersive systems.
We are interested in its validity in dissipative systems with a conservation law.
The prototype example for such a system is the generalized Ginzburg-Landau system that arises as a universal amplitude system for the description of a Turing-Hopf bifurcation in spatially extended pattern-forming systems with neutrally stable long modes.
In this paper we prove rigorous error estimates between the approximation obtained through Whitham's modulation equations and true solutions to this Ginzburg-Landau system.
Our proof relies on analytic smoothing, Cauchy-Kovalevskaya theory,
energy estimates in Gevrey spaces, and a local decomposition in Fourier space, which separates center from stable modes and uncovers a (semi)derivative in front of the
relevant nonlinear terms.

{\bf Keywords:} Whitham modulation equations, validity, wave trains, generalized Ginzburg-Landau system, Cauchy-Kovalevskaya theory

{\bf MSC classifications:}
35A35, 
35B10, 
35A10 
\end{abstract}

%
%
%
%


\section{Introduction}

Modulation, envelope, or amplitude equations, such as the Ginzburg-Landau equation, the KdV equation, and the NLS equation, play an important role in the qualitative and quantitative description of spatially extended dissipative or conservative physical systems.
Mathematical theorems show that these asymptotic models make correct predictions about the dynamics of the original systems on sufficiently long time scales.
Examples of regimes which can be described in such a way are pattern-forming systems close to their first instability,
the long-wave limit of the water wave problem, or highly oscillatory regimes in nonlinear optics, see~\cite{CE90,vH91,Cr85,Kal87}, and~\cite{SU17book} for a recent overview.

Asymptotic models can also be employed to approximate the dynamics of slow modulations of periodic wave trains.
If the underlying system is dissipative, approximation theorems have for instance been shown in~\cite{MS04a,DSSS09}, where
the phase diffusion equation and Burgers equation have been justified as asymptotic models.

In this paper we focus on Whitham's modulation equations (WMEs), cf.~\cite{Whithambuch}, as asymptotic models for the description of slowly varying periodic wave trains.
Although the WMEs were initially derived by Whitham for conservative systems~\cite{Wh65a,Wh65b}, they arise as asymptotic models in larger classes of problems.
As of today, the application and derivation of WMEs, so-called Whitham modulation theory, is still subject of active research~\cite{Br17}.
The simplest possible conservative example where WMEs can be derived is the NLS equation or a system of coupled NLS equations, see~\cite{DS09,BKS20,BKZ20} for rigorous approximation results.

If dissipative systems are coupled with a conversation law, then WMEs can be derived as asymptotic models for the description of slow modulations of periodic wave trains, too.
It can be shown, e.g.~by rigorous Evans function computations~\cite{JZ10} or by direct Bloch wave expansions~\cite{JZB10}, that spectral stability of periodic wave trains against large-wavelength perturbations is equivalent to local well-posedness of the associated asymptotic WMEs in Sobolev spaces.
Thus, Whitham's modulation theory has been employed to establish spectral stability of periodic wave trains in various settings~\cite{Se05,JNRZ15,JNRYZ19}.
In addition, WMEs have been used in~\cite{JZN11,JNRZ13,JNRZ14} to investigate the nonlinear stability of time- and space-periodic solutions to pattern-forming systems with a conservation law against localized and nonlocalized perturbations.

This begs the question whether a rigorous approximation result for the WMEs can be obtained in such dissipative systems coupled with a conservation law. To the authors' best knowledge, such an approximation result has not been established so far.

In this paper we make a first step in this direction by establishing such an approximation result for the generalized Ginzburg-Landau system that arises as a universal amplitude system for pattern-forming systems with a conservation law close to a first instability of Turing-Hopf type, see~Section~\ref{sec11}.
This generalized Ginzburg-Landau (gGL) system consists of the classical complex Ginzburg-Landau equation coupled with a conservation law. In normalized form it reads
\begin{align}
\begin{split}
 \partial_t A &= (1+ i \alpha  )\partial_x^2 A + A -(1+i \beta ) A |A|^2 + ( \gamma_r + i \gamma_i  ) A B, \\
 \partial_t B &= a \partial_x^2 B + c \partial_x B + d \partial_x (|A|^2),
 \end{split}\label{eq1}
\end{align}
with coefficients $ \alpha,\beta,\gamma_r,\gamma_i,c,d \in \mathbb{R} $, $ a > 0 $, and
with $ A(x,t) \in \mathbb{C} $ and $ B(x,t) \in \mathbb{R} $. The gGL system possesses a two-parameter family of wave-train solutions, which are given by
\begin{equation}
(A,B) = \left(e^{\rho-i\omega t+iqx},b\right), \label{phoe0}
\end{equation}
where $b, \omega,q,\rho \in \R$ satisfy
\begin{equation}\label{phoe1}
- \omega = - \alpha q^2 - \beta e^{2 \rho} + \gamma_i b
  \qquad \textrm{and} \qquad
0 = -q^2 +1 - e^{2 \rho} + \gamma_r b.
\end{equation}


\subsection{Pattern forming systems  with a conservation law}

\label{sec11}
The B\'enard-Marangoni problem~\cite{Ta81}, the flow down an inclined plane~\cite{CD02}, or the Faraday experiment~\cite{ANR14} are
examples of pattern-forming systems with a conservation law.
The WMEs can formally be derived to describe slow modulations in time and space of periodic wave trains
appearing in such systems.

Due to the conservation law such systems possess a family of ground states whose linearizations
have, even in the stable case, continuous spectrum up to the imaginary axis for all values of the bifurcation parameter, see Figure~\ref{fig1}.
The wave trains, in which we are interested, are generated when the ground states destabilize through a short wave instability leading to a Turing-Hopf bifurcation.
For parameter values close to this instability the gGL system~\eqref{eq1} can be derived via a multiple scaling analysis as a universal amplitude system describing slow modulations in time and space of the most unstable linear modes.
In~\cite{HSZ11} it has been shown for a toy problem that the gGL system~\eqref{eq1}
makes correct predictions about the dynamics of pattern-forming systems with a conservation law close to the first instability.

Thus, motivated by its descriptive properties, we will focus on the gGL system~\eqref{eq1} as original system in this paper to study slow modulations in time and space of its periodic wave-train solutions.
We do emphasize that the gauge-symmetry of~\eqref{eq1} simplifies the subsequent analysis because it allows to extract the
local wave number of the solutions in a trivial way. For general pattern-forming systems with a conservation law this step can be rather
technical, cf.~\cite{DSSS09}.

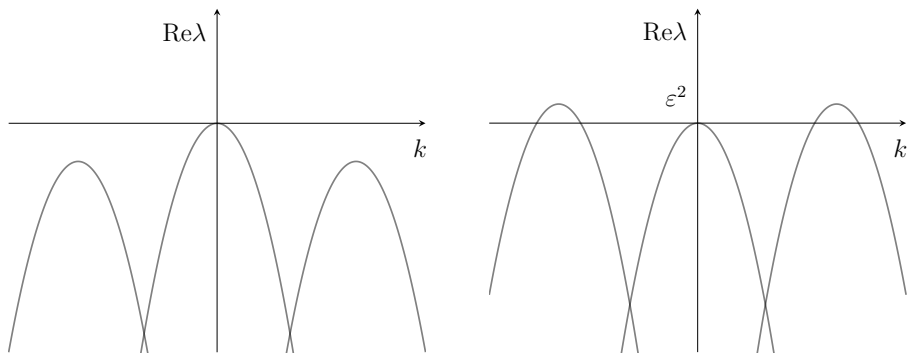
\begin{figure}[h] 
   \centering
     \begin{tikzpicture}[rotate=0,scale=0.8]
    \begin{axis}[
	xmin=-1.5, xmax=1.5,
	ymin=-0.6, ymax=0.3,
	axis lines=center,
	ticks=none,
    ]
      \addplot+[black,thick, domain=-0.6:0.6,samples=100,no marks,opacity=0.5] {-2*x^2};
         \addplot+[black,thick, domain=0.4:1.5,samples=100,no marks,opacity=0.5] {-2*(x-1)^2-0.1};
           \addplot+[black,thick, domain=-1.5:-0.4,samples=100,no marks,opacity=0.5] {-2*(x+1)^2-0.1};
           \node[below,black] at (axis cs:1.46,-0.02) {$ k $};
      \node[] at (axis cs:-0.35,0.45) {$\textrm{Re}(\lambda_+)$};
      \node[left=4pt] at (axis cs:0.05,0.24) {$ \textrm{Re} \lambda $};
    \end{axis}
  \end{tikzpicture}\qquad
    \begin{tikzpicture}[rotate=0,scale=0.8]
    \begin{axis}[
	xmin=-1.5, xmax=1.5,
	ymin=-0.6, ymax=0.3,
	axis lines=center,
	ticks=none,
    ]
      \addplot+[black,thick, domain=-0.6:0.6,samples=100,no marks,opacity=0.5] {-2*x^2};
         \addplot+[black,thick, domain=0.4:1.5,samples=100,no marks,opacity=0.5] {-2*(x-1)^2+0.05};
           \addplot+[black,thick, domain=-1.5:-0.4,samples=100,no marks,opacity=0.5] {-2*(x+1)^2+0.05};
           \node[below,black] at (axis cs:1.46,-0.02) {$ k $};
      \node[left=4pt] at (axis cs:0.05,0.24) {$ \textrm{Re} \lambda $};
          \node[left=4pt] at (axis cs:0.05,0.07) {$ \varepsilon^2 $};
    \end{axis}
  \end{tikzpicture}

    \caption{Solutions of the linearization about the trivial ground state of translationally invariant pattern-forming systems
   are proportional to $e^{ikx+\lambda_j(k) t} $.
   The left panel shows the relevant spectral curves $ k \mapsto \lambda_j(k) $ in the stable situation for systems with a conservation law, whereas the right panel depicts the unstable situation.}
   \label{fig1}
\end{figure}

\begin{remark}{\rm
To get some intuition about the role of $ A $ and $ B $ in~\eqref{eq1}
let us consider the flow down an inclined plane.
For this problem the ground states are given by the family of Nusselt solutions, which have a parabolic flow profile, with different constant fluid heights.
The complex amplitude $ A $ in~\eqref{eq1} describes slow modulations in time and space of the underlying bifurcating spatially periodic pattern.
The amplitude $ B $ describes slow modulations in time and space of the underlying background state. For the inclined plane problem these are variations of the averaged mean height of the fluid. }
\end{remark}

\subsection{Slow modulations of periodic wave trains}

The gGL system~\eqref{eq1} possesses a family of wave-train solutions given by~\eqref{phoe0}-\eqref{phoe1}.
Wave trains propagate through the spatial domain with constant speed while maintaining their periodic profile, and thus are periodic in time and space.
One readily verifies that such wave-train solutions only exist in~\eqref{eq1} when the trivial solution $ (A,B) = (0,b) $ is unstable.

W.l.o.g.~we consider $ b = 0 $ in the following.
%
%
Indeed, by setting $ (A,B) = \smash{(  \widetilde{A}, b + \widetilde{B})} $
we transform the equilibrium  $ (A,B) = (0,b) $ of the $ (A,B) $-system~\eqref{eq1} into the equilibrium
$\smash{ ( \widetilde{A},\widetilde{B}) = (0,0) }$ of the $\smash{ (\widetilde{A},\widetilde{B})}$-system.
By this transformation
the additional  term $\smash{ ( \gamma_r + i \gamma_i  ) b  \widetilde{A}  }$ appears in the $\smash{  \widetilde{A} }$-equation.
By setting $\smash{ \widetilde{A} = \breve{A} e^{i \gamma_i   b t} }$ the part  $\smash{  i \gamma_i   b  \widetilde{A}   }$ can be removed
from the $ \smash{ \widetilde{A} }$-equation. In addition, using that in the relevant regime~\eqref{phoe1} it holds $1+\gamma_r b = q^2 + e^{2\rho} > 0$, the $ \smash{(\breve{A} ,  \widetilde{B}) }$-system
can be brought back to the normal form~\eqref{eq1} with transformed
coefficients $ \gamma $, $ a $, $ c $, and $ d $ upon rescaling $\smash{ \breve{A} , \widetilde{B} }$, $ t $ and $ x $.

We are interested in the dynamics near the
family of wave trains given by~\eqref{phoe0}-\eqref{phoe1}. In particular,
we are interested in slow long-wave modulations of
these solutions.
For our purposes  it turns out to be advantageous
to work in
polar coordinates of the form $ A = e^{\widetilde{r}+ i \widetilde{\varphi}} $, in which the gGL system~\eqref{eq1} reads
\begin{align*}
\partial_t \widetilde{r} & = \partial_x^2 \widetilde{r} - \alpha \partial_x^2 \widetilde{\varphi} + 1- e^{2\widetilde{r}} + (\partial_x \widetilde{r})^2- (\partial_x \widetilde{\varphi})^2
-2 \alpha (\partial_x \widetilde{\varphi})(\partial_x \widetilde{r}) + \gamma_r B, \\
\partial_t \widetilde{\varphi} & = \partial_x^2 \widetilde{\varphi} + \alpha \partial_x^2 \widetilde{r} - \beta  e^{2\widetilde{r}} + \alpha  (\partial_x \widetilde{r})^2- \alpha  (\partial_x \widetilde{\varphi})^2 + (\partial_x \widetilde{\varphi})(\partial_x \widetilde{r}) + \gamma_i B, \\
\partial_t B  & = a\partial_x^2 B + c \partial_x B + d \partial_x (e^{2\widetilde{r}}).
\end{align*}
In  these polar coordinates
the family of wave-train solutions~\eqref{phoe0}
is given by $ \widetilde{r} = \rho $, $ \widetilde{\varphi} = -\omega t +qx$, and $ B = 0 $.
The deviation from these solutions is then introduced by $ \widetilde{\varphi} =  - \omega t +qx + \phi$,  $\tilde r = \rho + r$ and $\widetilde B = 0 + B$.
Thus, using~\eqref{phoe1} to simplify, the modulation $(r,\phi,B)$ of the wave trains satisfies the equations
\begin{align*}
\partial_t r & = \partial_x^2 r - \alpha \partial_x^2 \phi + e^{2\rho}(1- e^{2r}) + (\partial_x r)^2
- 2 q (\partial_x \phi) - (\partial_x \phi)^2\\&  \qquad
-2 \alpha q(\partial_x r) -2 \alpha (\partial_x \phi)(\partial_x r) + \gamma_r B, \\
\partial_t \phi & = \partial_x^2 \phi + \alpha \partial_x^2 r + \beta e^{2\rho}(1 - e^{2r}) + \alpha  (\partial_x r)^2- 2\alpha q (\partial_x \phi)- \alpha  (\partial_x \phi)^2
\\ &  \qquad
+2 q(\partial_x r) +2 (\partial_x \phi)(\partial_x r) + \gamma_i B, \\
\partial_t B  & = a \partial_x^2 B + c \partial_x B + d e^{2\rho} \partial_x (e^{2r}).
\end{align*}

\subsection{Formal derivation of the WMEs}

\label{sec13}

The WMEs, which we derive in the following, describe the evolution of the
local wave number $ \psi = \partial_x \phi $, for which we find
 \begin{align} \label{hamb1}
  \begin{split}
\partial_t r & =  \partial_x^2 r - \alpha \partial_x \psi + e^{2\rho}(1- e^{2r}) + (\partial_x r)^2
-2 q  \psi -  \psi^2
\\& \qquad  - 2 \alpha q (\partial_x r)-2 \alpha  \psi(\partial_x r) + \gamma_r B , \\
\partial_t \psi & = \partial_x^2 \psi + \alpha \partial_x^3 r  + \beta  e^{2\rho}\partial_x (1- e^{2r}) + \alpha \partial_x( (\partial_x r)^2)\\& \qquad- 2 \alpha  q \partial_x \psi- \alpha  \partial_x( \psi^2)  +2 q \partial_x^2 r
 +2 \partial_x(\psi(\partial_x r)) + \gamma_i \partial_x B,  \\
\partial_t B  & =  a\partial_x^2 B + c \partial_x B + d e^{2\rho} \partial_x (e^{2r}).
\end{split}
\end{align}
With $ T = \veps t $ and $ X = \veps x $ we make the long-wave approximation
\begin{equation*} 
r = \check{r}(X,T), \qquad \psi =  \check{\psi}(X,T), \qquad \textrm{and} \qquad B = \check{B}(X,T),
\end{equation*}
where $0 < \veps \ll 1$ is a small parameter. Under this scaling~\eqref{hamb1} transforms into
 \begin{align} \label{hann1}
 \begin{split}
\veps \partial_T \check{r} & =  \veps^2 \partial_X^2 \check{r} - \veps \alpha \partial_X \check{\psi} + e^{2\rho}(1- e^{2\check{r}}) + \veps^2 (\partial_X \check{r})^2
-2 q  \check{\psi} -  \check{\psi}^2
\\& \qquad  - 2 \veps \alpha q (\partial_X \check{r})-2 \alpha \veps  \check{\psi}(\partial_X \check{r}) + \gamma_r \check{B}, \\
\partial_T \check{\psi} & =  \veps \partial_X^2 \check{\psi} +  \veps^2 \alpha \partial_X^3 \check{r}  + \beta  e^{2\rho}\partial_X (1- e^{2\check{r}}) +  \veps^2 \alpha \partial_X( (\partial_X \check{r})^2)\\& \qquad- 2 \alpha  q \partial_X \check{\psi}- \alpha  \partial_X( \check{\psi}^2)  +2  \veps q \partial_X^2 \check{r}
 +2  \veps \partial_X(\check{\psi}(\partial_X \check{r})) + \gamma_i \partial_X \check{B},  \\
\partial_T \check{B}  & =   \veps a\partial_X^2 \check{B} + c \partial_X \check{B} + d e^{2\rho} \partial_X (e^{2\check{r}}).
\end{split}
\end{align}
At $ \mathcal{O}(\veps^0) $ we find
 \begin{align} \label{wyk2}
 0 & =  -2 q \check{\psi} - (\check{\psi})^2 + e^{2\rho}(1-  e^{2 \check{r}})+ \gamma_r   \check{B}, \\
 \partial_{T} \check{\psi} & =  \beta \partial_{X}( e^{2\rho} (1- e^{2 \check{r} }))-2 \alpha  q \partial_{X} \check{\psi}- \alpha  \partial_{X}( \check{\psi}^2)
 + \gamma_i  \partial_{X} \check{B}, \label{wyk3}
 \\
\partial_{T}  \check{B} & =   c \partial_{X} \check{B} + d e^{2\rho} \partial_{X} (e^{2\check{r}}). \label{wyk4}
 \end{align}
Equation~\eqref{wyk2} can be solved explicitly w.r.t.~$ e^{2 \check{r}} $.
Inserting this into the
equations~\eqref{wyk3} and~\eqref{wyk4} for $ \check{\psi} $ and $ \check{B} $ finally gives the WMEs,
a system of first order conservation laws,
\begin{align} \label{whi1b}
\begin{split}
 \partial_{T} \check{\psi} & =  \partial_{X}(  \beta  ( 2 q\check{\psi} +(\check{\psi})^2-\gamma_r   \check{B} ) - 2 \alpha  q \check{\psi} - \alpha  \check{\psi}^2
 + \gamma_i  \check{B}), \\
\partial_{T}  \check{B} & =   \partial_{X} (c \check{B} + d  (
- 2q\check{\psi}- (\check{\psi})^2+ \gamma_r   \check{B} )  ).
\end{split}
 \end{align}
Depending on the
coefficients, (the linearization of)~\eqref{whi1b}
has a hyperbolic or an elliptic character, cf.~Remark~\ref{remhypel}.
Since~\eqref{whi1b} only contains first derivatives,
 solutions
can be constructed with the Cauchy-Kovalevskaya theorem for analytic initial conditions
independent of the  character of~\eqref{whi1b}.
It is the goal of this paper to prove that~\eqref{whi1b} makes correct predictions about the dynamics
of~\eqref{hamb1}. Such a proof is a non-trivial task, since solutions of order
$ \mathcal{O}(1)$ have to be controlled on an $ \mathcal{O}(1/\veps) $-time scale, cf.~Remark~\ref{remNT}.

\subsection{The functional analytic set-up} 

In order to  state our main result we have to introduce a number of function spaces and notations, where
in most cases  we do not distinguish between
scalar and vector-valued functions.

The Fourier transform of a function $u$ is denoted by
\begin{align*}
\F(u)(k) = \hat{u}(k) = \frac{1}{\sqrt{2\pi}} \int_\R e^{-ikx} u(x)dx. 
\end{align*}
%
%

Due to the possible elliptic character of~\eqref{whi1b}
we work in Gevrey spaces $\Chi_\sigma^m$, which are defined by the inner product
\begin{align*} 
( u,v)_{\Chi^m_\sigma} &= \int_\R e^{2\sigma (1+|k|)}\left(1+|k|^{2}\right)^{m} \widehat u(k) \overline{\widehat{v}(k)} dk,
\end{align*}
for $\sigma \geq  0$ and  $m  \geq 0$.
 In the subsequent proofs, we use that
$G_\sigma^m$ is an algebra for $m>1/2$ and if $u,v\in G_\sigma^m$ then

\begin{equation}\label{1.alg}
\|uv\|_{G_\sigma^m}\le C_m\|u\|_{G_\sigma^m}\|v\|_{G^m_\sigma},
\end{equation}
where the constant $C_m > 0$ is independent of $\sigma\geq0$. In the vector-valued
case the product is replaced by an inner product on $\R^d$.
The formula (\ref{1.alg}) can be improved to
\begin{equation} \label{1.kp}
\|uv\|_{G^{m_1}_\sigma}\le C_{m_1,m_2} \(\|u\|_{G^{m_1}_\sigma}\|v\|_{G^{m_2}_\sigma}+\|u\|_{G^{m_2}_\sigma}\|v\|_{G^{m_1}_\sigma}\),
\end{equation}
which holds for all $\sigma \ge0$ and $m_{1,2} > 1/2$, where the constant $C_{m_1,m_2}$ is independent of $\sigma \geq 0$.

Moreover, for any entire function $\phi$ with  $\phi(0)=0$ and any $m > 1/2$, there is an entire function $\phi_m(z)$, which is monotonically increasing on $\R_{+}$ and satisfies $\phi_m(0)=0$, such that
for all $  u\in G^m_{\sigma} $ we have
\begin{equation}\label{1.f}
\|\phi(u)\|_{G^m_\sigma}\le \phi_m(\|u\|_{G^m_\sigma}).
\end{equation}
Functions $ u \in G^m_\sigma$  can be extended to functions that are analytic on the strip $\{z \in \C \colon |\Im(z)| < \sigma\}$ by the Paley-Wiener Theorem, cf.~\cite[Theorem IX.13]{RS75B2}.
It is easily seen that for any $\sigma_1  > \sigma_2 \geq 0 $ and any $m \geq 0$ we have the
 continuous embedding
$G^0_{\sigma_1} \hookrightarrow G^m_{\sigma_2} $.


\subsection{Main results and some remarks} 

For the formulation of our main result, we introduce $ V = (r,\psi,B)$, and abbreviate~\eqref{hamb1} as
\begin{equation} \label{nea1}
\partial_t V= L V +N(V),
\end{equation}
where $ LV$ stands for the linear and $ N(V)$ for the nonlinear terms.

Now let $(\check{\psi},\check{B})(X,T)$ be a solution to the WMEs~\eqref{whi1b}, whose local existence and uniqueness in Gevrey spaces follows from Cauchy-Kovalevskaya theory, see~\S\ref{sec-CK-gevrey}.
Then, as outlined in~\S\ref{sec13}, upon defining the function $\check{r}(X,T)$ as the corresponding solution to the algebraic equation~\eqref{wyk2}, we obtain the formal WMEs approximation
\begin{align}
V_{app}^{*,\epsilon}(x,t) = \left(\check{r},\check{\psi},\check{B} \right)(\veps x , \veps t), \label{formalapprox}
\end{align}
for long-wave modulations of wave-train solutions to the gGL system.

The following approximation result shows that, if $(\check{\psi},\check{B})(X,T)$ is sufficiently small in the appropriate Gevrey norm, then the WMEs approximation $V_{app}^{*,\epsilon}(x,t)$ approximates a modulation $V(x,t)$ satisfying~\eqref{nea1} on the natural $\mathcal{O}(1/\veps)$-time scale. Thus, the WMEs make correct predictions about the dynamics of slow modulations of the wave-train solutions~\eqref{phoe0} to the gGL system~\eqref{eq1}, cf.~Remark~\ref{recoverphaserem}.

\begin{theorem} \label{mainresult1}
For  $m > 3/2$, $\sigma_{0}, T_0 > 0$ there exists a constant $ C_{wh} > 0 $ such that the following holds.
Let
$$(\check{\psi},\check{B}) \in C^1\big((0, T_0],\Chi^m_{\sigma_0}\big) \cap C\big([0,T_0],\Chi^{m+1}_{\sigma_0}\big),$$
be a solution to the WMEs~\eqref{whi1b}  with
\begin{align} \label{whismal}
\sup_{T \in [0,T_0]} \| (\check{\psi},\check{B})(\cdot,T) \|_{G^{m+1}_{\sigma_{0}}} \leq {C_{wh}},
\end{align}
and let $\check{r}(X,T)$ be the corresponding solution to the algebraic equation~\eqref{wyk2}. Then, there exist $C,T_1, \veps_0 > 0$ such that for all $\veps \in (0,\veps_0)$, there exists a solution
\begin{align}\label{regV} V \in C^1\big((0,T_1/\veps],H^m(\R)\big) \cap C\big([0,T_1/\veps],H^{m+2}(\R)\big),\end{align}
to~\eqref{nea1} with
\begin{align}
 \sup_{t \in [0,T_1/\veps]} \sup_{x \in \mathbb{R}} |
 V(x,t) - V_{app}^{*,\veps}( x,t) | \leq C \veps^{1/2},
\label{mr2}
\end{align}
where $V_{app}^{*,\epsilon}(x,t)$ is given by~\eqref{formalapprox}.
\end{theorem}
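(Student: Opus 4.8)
The plan is to set up the standard residual-plus-error framework, but to carry it out in Gevrey spaces $G^m_\sigma$ so that the Cauchy–Kovalevskaya machinery from \S\ref{sec-CK-gevrey} can be used to control the loss of regularity and, more importantly, to control the loss of analyticity width $\sigma$ coming from the possibly elliptic character of the WMEs. First I would define the error
$$
R = V - V_{app}^{*,\veps},
$$
where $V_{app}^{*,\veps}$ is the formal approximation~\eqref{formalapprox}. Plugging $V = V_{app}^{*,\veps} + R$ into~\eqref{nea1} yields an equation of the form
$$
\partial_t R = L R + DN(V_{app}^{*,\veps}) R + \mathcal{N}(R) + \mathrm{Res}(V_{app}^{*,\veps}),
$$
where $\mathcal{N}$ collects the genuinely quadratic-and-higher terms in $R$ and $\mathrm{Res}(V_{app}^{*,\veps}) = -\partial_t V_{app}^{*,\veps} + L V_{app}^{*,\veps} + N(V_{app}^{*,\veps})$ is the residual obtained by inserting the approximation into the gGL system. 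The first task is a residual estimate: using that $(\check\psi,\check B)$ solves~\eqref{whi1b} and $\check r$ solves~\eqref{wyk2} exactly, all the $\mathcal{O}(\veps^0)$ and the relevant $\mathcal{O}(\veps)$ terms cancel, and a careful bookkeeping through the scaling~\eqref{hann1} (the $\check\psi$-equation is the delicate one, since it is $\mathcal{O}(1)$ on the left) should give
$$
\sup_{t\in[0,T_0/\veps]} \|\mathrm{Res}(V_{app}^{*,\veps})(\cdot,t)\|_{G^m_{\sigma(t)}} \le C\veps^{3/2},
$$
with the extra half-power reflecting the $L^2$-normalization against an $\mathcal{O}(1)$-wide bump in Fourier space; this is the origin of the $\veps^{1/2}$ in~\eqref{mr2}. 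The smallness hypothesis~\eqref{whismal} and the entire-function estimate~\eqref{1.f} together with the algebra property~\eqref{1.alg}–\eqref{1.kp} keep all nonlinear contributions of the approximation bounded by a fixed constant.

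Next I would perform the local decomposition in Fourier space advertised in the abstract. The linear operator $L$, after Fourier transform, is a $3\times 3$ matrix $\hat L(k)$; at $k=0$ it has a double zero eigenvalue (the conservation law plus the phase/gauge direction) and a genuinely stable third eigenvalue. The key structural point — "uncovering a (semi)derivative in front of the relevant nonlinear terms" — is that, after projecting onto the center subspace near $k=0$ and diagonalizing, the quadratic interactions among center modes come with a factor $ik$ (a genuine derivative) or a factor bounded by $\sqrt{|k|}$ in the mixed center–stable interactions. This is exactly what is needed so that the quadratic terms, which are only $\mathcal{O}(1)$ and cannot be absorbed by smallness alone on the long time scale, instead produce a gain that can be paired with the energy's spatial derivative. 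Concretely, I would write the error $R$ in the decomposed variables, set up the Gevrey energy
$$
\mathcal{E}(t) = \tfrac12\|R(\cdot,t)\|_{G^m_{\sigma(t)}}^2,
$$
with a time-dependent, decreasing analyticity radius $\sigma(t) = \sigma_0 - \kappa \veps t$ chosen so that $\dot\sigma < 0$ contributes a negative term $-\kappa\veps\|(1+|k|)^{1/2}R\|^2_{G^m_{\sigma(t)}}$ to $\frac{d}{dt}\mathcal{E}$ — this is the Cauchy–Kovalevskaya trick and it is precisely the reservoir of $\sqrt{|k|}$-smoothing that compensates the semiderivative deficit in the mixed terms and allows the WME approximation (with its possibly elliptic, derivative-losing structure) to be controlled. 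The linear dissipative part of $\hat L(k)$ on the stable modes contributes a further good term $-c_0\|(1+|k|)\cdot(\text{stable part of }R)\|^2$.

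Then comes the energy inequality. Differentiating $\mathcal{E}$, the linear terms give $\le -c_0(\text{good stable term}) - \kappa\veps(\text{good semiderivative term}) + C(\text{center part})^2$; the term $DN(V_{app}^{*,\veps})R$ is linear in $R$ with coefficients of size $\mathcal{O}(C_{wh})$ (plus $\mathcal{O}(\veps)$ pieces), hence absorbable for $C_{wh}$ small using the algebra property and the embedding $G^0_{\sigma_1}\hookrightarrow G^m_{\sigma_2}$; the purely quadratic term $\mathcal{N}(R)$ is estimated by~\eqref{1.kp} and the extracted derivative/semiderivative so that it is bounded by $C\mathcal{E}^{1/2}\cdot(\text{good terms})^{1/2}\cdot\mathcal{E}^{1/2} \lesssim \veps^{-1/2}\mathcal{E}\cdot(\text{good terms})^{1/2}$ — wait, more carefully: one splits the nonlinearity so each factor of $R$ in a center–center interaction carries an $ik$, giving $\|\mathcal{N}(R)\|_{G^m}\lesssim \|R\|_{G^{m}}\,\|(1+|k|)^{1/2}R\|_{G^m}$ for the borderline pieces, whence
$$
\langle R, \mathcal{N}(R)\rangle_{G^m_{\sigma}} \le C\,\mathcal{E}(t)^{1/2}\big(\text{semiderivative good term}\big) \le \tfrac{\kappa\veps}{2}(\text{good term}) + \frac{C^2}{2\kappa\veps}\mathcal{E}(t)^2,
$$
and the residual contributes $\langle R,\mathrm{Res}\rangle \le \tfrac12\mathcal{E} + \tfrac12 C^2\veps^3$. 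Collecting everything and defining $\tilde{\mathcal{E}} = \veps^{-1}\mathcal{E}$ (or rescaling time to $T = \veps t$), one obtains a differential inequality of Gronwall type
$$
\frac{d}{dT}\tilde{\mathcal{E}} \le C_1 \tilde{\mathcal{E}} + C_2 \tilde{\mathcal{E}}^2 + C_3 \veps,
$$
valid as long as $\sigma(t) \ge \sigma_0/2$, i.e.\ for $T\in[0,T_1]$ with $T_1 = \min\{T_0, \sigma_0/(2\kappa)\}$, and valid as long as $\tilde{\mathcal{E}}$ stays in the region where the quadratic $C_2\tilde{\mathcal{E}}^2$ is dominated by the linear term. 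A standard bootstrap then gives $\tilde{\mathcal{E}}(T) \le C\veps$ on $[0,T_1]$, hence $\|R(\cdot,t)\|_{G^m_{\sigma_0/2}} \le C\veps$ for $t\in[0,T_1/\veps]$; Sobolev embedding $G^m_{\sigma_0/2}\hookrightarrow H^m(\R)\hookrightarrow C^0_b(\R)$ for $m>1/2$ upgrades this to the sup-norm bound~\eqref{mr2}, and the stated regularity~\eqref{regV} of $V$ follows from parabolic smoothing of the gGL system applied to $R = V - V_{app}^{*,\veps}$ once local existence is known (local existence of $V$ in $H^m$ is classical for the semilinear parabolic system~\eqref{eq1}, and the a priori bound extends it to $[0,T_1/\veps]$).

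The main obstacle, and the heart of the paper, is the energy estimate in the decomposed variables: one must verify that the semiderivative gain $\sqrt{|k|}$ manufactured by the shrinking Gevrey radius is \emph{exactly} enough to close the quadratic terms on the $\mathcal{O}(1/\veps)$ time scale — there is no room for a full derivative there (the WMEs are first order and possibly elliptic, so no genuine parabolic smoothing is available in the center directions), and a naive energy estimate without extracting the $ik$/$\sqrt{|k|}$ from the nonlinearity would lose a derivative and fail. Getting the structural normal form of $\hat L(k)$ right near $k=0$ — in particular confirming that \emph{every} dangerous center–center quadratic term is in divergence form and that the mixed center–stable terms carry at least a half-derivative — is where all the work lies; the residual estimate and the Gronwall bootstrap are comparatively routine once that structure is in hand.
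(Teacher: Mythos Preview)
Your outline captures the right ingredients---Gevrey spaces with a shrinking width, a center/stable decomposition near $k=0$, and the divergence structure of the center nonlinearity---but two scaling choices are wrong in a way that prevents the argument from closing.

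First, the shrinking rate $\dot\sigma=-\kappa\veps$ is too slow. In the unscaled $(x,t)$-variables the center linear operator satisfies $\mathrm{Re}\,\langle\widehat V_c,\widehat\Lambda_c(k)\widehat V_c\rangle\le d_{\Lambda,1}|k|\,|\widehat V_c|^2$ with an $\veps$-\emph{independent} constant $d_{\Lambda,1}$, so the bad linear contribution $d_{\Lambda,1}\||k|_{op}^{1/2}R_c\|^2$ cannot be absorbed by your good term $\kappa\veps\|(1+|k|_{op})^{1/2}R\|^2$. The paper instead exploits that the WMEs solution, being analytic of width $\sigma_0$ in the \emph{slow} variable $X=\veps x$, furnishes analyticity of width $\mathcal{O}(1/\veps)$ in $x$; it sets $\sigma(t)=\tilde\sigma_1/\veps-\eta t$ and shrinks at an $\mathcal{O}(1)$ rate, choosing $\eta$ large enough to dominate both $d_{\Lambda,1}$ and the linear-in-$R$ pieces of the nonlinearity. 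The eventual $\mathcal{O}(\veps)$ growth rate of the center energy then comes not from $\dot\sigma$ but from the long-wave structure of the approximation: when the factor $\vartheta$ (i.e.\ the derivative) in the center nonlinearity is distributed, the piece falling on $W_{\rm an}$ gains a factor $\veps$ because $\|\,|k|\,\widehat{H(W_{\rm an})}\|_{L^1}=\mathcal{O}(\veps)$, while the piece falling on $R$ produces an $|k|_{op}^{1/2}$-weighted term that is absorbed by $\eta$. This is precisely Lemma~\ref{Lem3.1}, and it is why the paper works with three coupled energies $E_{s,m}$, $|k|_{op}^{1/2}E_{s,r}$, $E_{c,r}$ rather than a single one.

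Second, your residual estimate $\mathcal{O}(\veps^{3/2})$ is too optimistic for the leading-order approximation alone: in the $r$-equation the $\mathcal{O}(\veps)$ terms $-\veps\partial_T\check r-\veps\alpha\partial_X\check\psi-2\alpha q\veps\partial_X\check r-2\alpha\veps\check\psi\partial_X\check r$ in~\eqref{hann1} do \emph{not} cancel, so the residual there is only $\mathcal{O}(\veps^{1/2})$ in the Gevrey norm after accounting for the $\veps^{-1/2}$ scaling penalty. The paper therefore constructs higher-order approximations $(\widetilde\br^n,\widetilde\bu^n)$ via iterated Cauchy--Kovalevskaya (Theorem~\ref{Th2.sol}, Corollary~\ref{Cor2.res}) to push the residual to $\mathcal{O}(\veps^{n+1})$, and writes the error as $\veps^\kappa R$ with $\kappa>1$ so that the genuinely nonlinear-in-$R$ terms carry an explicit factor $\veps^\kappa$ absorbable by the $\eta$-damping. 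Without these two devices your final Gr\"onwall inequality does not close: the growth constant $C_1$ in the slow time $T$ would be $\mathcal{O}(1/\veps)$ rather than $\mathcal{O}(1)$.
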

Our proof is based on analytic smoothing, Cauchy-Kovalevskaya theory,
energy estimates in Gevrey spaces, and a local decomposition in
Fourier space separating center from stable modes and uncovering a semiderivative in front of the relevant nonlinear terms. This semiderivative,  which corresponds to a Fourier multiplier vanishing at frequency $k = 0$, allows us to control the center modes on the long $\mathcal{O}(1/\veps)$-time scale.
We note that, in order to obtain a sufficiently small residual, higher-order terms have to be added. Thus, we construct, for every given fixed $\kappa > 1$, a higher-order approximation for which the error bound in~\eqref{mr2} is $\leq C \veps^{\kappa - 1/2} $ instead of $\leq C\veps^{1/2}$, cf.~\S\ref{sec-approx}.

\begin{remark}\label{remNT}
{\rm
We note that obtaining the above validity result is a nontrivial task. The WMEs approximation $V_{app}^{*,\epsilon}(x,t)$, and thus also the associated solution $V(x,t)$ to~\eqref{nea1}, are of order $ \mathcal{O}(1) $ for $ \veps \to 0 $. Therefore, a simple application of Gr\"onwall's inequality would only provide the boundedness of the solutions on an $ \mathcal{O}(1) $-time scale, but not on the natural $ \mathcal{O}(1/\veps) $-time scale of the WMEs approximation.

In this context, we emphasize that there is a number of counterexamples where formally derived amplitude equations make wrong predictions on the natural time scale, cf.~\cite{Schn96MN,SSZ14,HS20}. Although Theorem~\ref{mainresult1} is not optimal in the sense that the possible approximation time $ T_1/\veps $ is possibly smaller than  $ T_0/\veps $,
 we do establish an approximation result on the natural $ \mathcal{O}(1/\veps) $-time scale of the WMEs approximation $V_{app}^{*,\epsilon}(x,t)$.
  }
\end{remark}
 \begin{remark}{\rm
It is a natural question whether the Gevrey spaces can be replaced
by classical  Sobolev spaces,
i.e., what happens if we give up analyticity on a strip in the
complex plane and choose the solutions to the WMEs to be only finitely many
times differentiable.
We have no answer at this point and have to postpone the
question to future research.
However, such an improved result can only be true under the additional
assumption that the wave train is spectrally stable, cf.~\cite{BKZ20}.
 }
\end{remark}
\begin{remark} \label{recoverphaserem} {\rm
In order to formulate the approximation result in the original $ (A,B) $-variables
we have  to regain the phase $\phi$ from the local wave number $\psi = \partial_x \phi$.
Integrating the pointwise bound from Theorem~\ref{mainresult1} leads to an error bound on a spatial interval of length $\mathcal{O}(\epsilon^{-\rho})$ with  $\rho \in (0,\kappa-1/2)$  rather than an $\R$-uniform error bound.
Moreover, we have to allow for a global phase $ e^{i \phi(0,t)} $.
These two restrictions  have been observed in a number of papers before.
For a detailed discussion we refer to~\cite[Section 4]{MS04b} or~\cite[Corollary 2.2]{DS09}.
}
\end{remark}

The plan of the paper is as follows. Section~\ref{sec2bks} focusses on the
construction and properties of
approximate solutions.
In Section~\ref{secdrei} we estimate the difference between these approximate
solutions and exact solutions to the gGL system, and thereby prove Theorem~\ref{mainresult1}.
\medskip

{\bf Notation.} In the following many possibly different constants are denoted by the same
symbol $ C $ as long as they can be chosen independent of the small perturbation parameter
$ 0 < \veps \ll 1 $.
\medskip

{\bf Acknowledgement.} This work is supported by the Deutsche Forschungsgemeinschaft DFG through the Sonderforschungsbereich  1173 ,,Wave phenomena: analysis and numerics''.

\section{Construction of approximate solutions}

\label{sec2bks}

In this section we collect the necessary Cauchy-Kovalevskaya theory in Gevrey spaces to yield local existence and uniqueness of solutions to the WMEs~\eqref{whi1b}.
In addition, we construct higher-order approximations to the slow modulations, which are needed for the proof of our main result, Theorem~\ref{mainresult1}.
Local existence of these higher-order approximations in Gevrey spaces is established by another application of Cauchy-Kovalevskaya theory.

\subsection{The structure of the problem} 

We start by putting the WMEs in the right framework for Cauchy-Kovalevskaya theory following~\cite{BKS20}. Thus, we switch to $(X,T)$-coordinates in the original system~\eqref{hamb1}, and observe that~\eqref{hann1} can be written as
\begin{align} \label{hann4}
\begin{split}
0&= {\bf M}_r(\br,\bu) +\eps  {\bf F}_r(D^k_X\br,D^k_X\bu,\partial_T \br), \\
\Dt \bu&={\bf M}_u(\br,\bu)\partial_X (\br,\bu)+\eps {\bf F}_u(D^k_X\br,D^k_X\bu),
 \end{split}
\end{align}
  where $ \br= r $ and
  ${\bf u} = (\psi,B)$,  where  $ {\bf M}_r(\br,\bu) $ and  $ {\bf M}_u(\br,\bu) $  are  entire
  functions of their arguments,
where $ {\bf F}_u $ is polynomial in
  $D_X^k\br=(\br,\partial_X\br,\ldots,\partial_X^k\br)$ and
  $D_X^k\bu=(\bu,\partial_X\bu,\ldots,\partial_X^k\bu)$ with $k = 3$,
and where $ {\bf F}_r $ is polynomial in  $D_X^k\br$,
  $D_X^k\bu$, and linear in $ \partial_T \br $.
 In detail, we  have
 \begin{align*}
 {\bf M}_r(\br,\bu) & =  -2 q \check{\psi} - (\check{\psi})^2 + e^{2\rho}(1-  e^{2 \check{r}})+ \gamma_r   \check{B}, \\
  \eps  {\bf F}_r(D^k_X\br,D^k_X\bu,\partial_T \br)  & =
 - \veps \partial_T \check{r} + \veps^2 \partial_X^2 \check{r} - \veps \alpha \partial_X \check{\psi}  - 2 \veps \alpha q (\partial_X \check{r})-2 \alpha \veps  \check{\psi}(\partial_X \check{r})
  , \\
 {\bf M}_u(\br,\bu) & =
  \left(\begin{array}{ccc}
   - 2  \beta  e^{2\rho} e^{2\check{r}}
  &  - 2 \alpha  q   - 2 \alpha  \check{\psi}
&   \gamma_i ,
  \\
 2 d e^{2\rho} e^{2\check{r}} & 0 & c
  \end{array} \right)
 , \\
  \eps  {\bf F}_u(D^k_X\br,D^k_X\bu)  & =
  \left(\begin{array}{c}
   \veps \partial_X^2 \check{\psi} +  \veps^2 \alpha \partial_X^3 \check{r}+  \veps^2 \alpha \partial_X( (\partial_X \check{r})^2)+2  \veps q \partial_X^2 \check{r}  +2  \veps \partial_X(\check{\psi}(\partial_X \check{r}))
  \\
   \veps a\partial_X^2 \check{B}
   \end{array} \right).
\end{align*}
In our notation we suppressed the explicit dependence of the functions $  {\bf F}_r $ and $  {\bf F}_u $
on the small parameter $ \eps $.
The advantage of the form~\eqref{hann4} is that for $\eps=0$
  it reduces to~\eqref{wyk2}-\eqref{wyk4}, namely
  \begin{align} \label{hann6}
0&= {\bf M}_r(\br,\bu) , \\
\Dt \bu&={\bf M}_u(\br,\bu)\partial_X (\br,\bu) .
 \label{hann7}
\end{align}
We solve~\eqref{hann6} w.r.t.~$ \br = \br^*(\bu)
$ for $ \bu $
sufficiently small. Inserting this into~\eqref{hann7} yields the WMEs~\eqref{whi1b}, now written as
  \begin{equation*} 
    \partial_T \bu^* = {\bf M}(\bu^*)\partial_X\bu^*    = {\bf M}_u(\br^*(\bu^*),\bu^*)\partial_X (\br^*(\bu^*),\bu^*)\,,
  \end{equation*}
 with
 \begin{align} \label{above}
 {\bf M}(\bu^*) =
  \left(\begin{array}{cc}
  2 (\beta-\alpha) q + 2 (\beta-\alpha) \check{\psi}
  &  - \beta \gamma_r+ \gamma_i
  \\
  -2dq- 2d  \check{\psi}
    & c + d \gamma_r
    \end{array} \right)
 \end{align}
an entire function of its arguments.

\subsection{Cauchy-Kovalevskaya theory in Gevrey spaces}

\label{sec-CK-gevrey}

The following Cauchy-Kovalevskaya theorem provides local existence and uniqueness in Gevrey spaces for the quasilinear abstract Cauchy problem of the form
\begin{equation}\label{1.PDE}
\partial_T \bu={\bf M}(\bu)\partial_X \bu,\ \ \bu\big|_{T=0}=\bu_0\,, \qquad X \in \R, \, T \geq 0,
\end{equation}
where $\bu(X,T)$ is the unknown function taking values in $\R^d$, the initial condition $\bu_0$ lies in the Gevrey space $\Chi^{m}_{\sigma_0}$ for some $m>1$ and $\sigma_0 > 0$, and ${\bf M}(\bu)$ is an entire matrix-valued function.

\begin{theorem}\label{Th1.CK}
Let $m>1$ and $R, \sigma_0>0$.
Then, for every $\bu_0\in G_{\sigma_0}^{m}$ with $2\lVert \bu_0 \rVert_{G_{\sigma_0}^{m}} < R$ and ${\sigma_1 \in (0,\sigma_0)}$, there exists an $\eta = \eta(R,m,\sigma_0,\sigma_1) > 0$ such that for $T_0 = (\sigma_0-\sigma_1)/\eta$ there exists a local solution
  $\bu \in C^1\big((0,T_0],G^{m-1}_{\sigma_1}\big) \cap C\big([0,T_0],G^{m}_{\sigma_1}\big)$ to~\eqref{1.PDE}, satisfying
  \begin{align} \sup_{T \in[0,T_0]} \| u(T) \|_{G^{m}_{\sigma_1}} \leq R. \label{solbound}\end{align}
\end{theorem}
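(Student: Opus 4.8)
The plan is to run a standard Cauchy--Kovalevskaya fixed-point argument adapted to the Gevrey scale $\{G^m_\sigma\}_{\sigma\le\sigma_0}$, exploiting that the loss of one spatial derivative in $\partial_X\bu$ can be traded against a loss in the analyticity radius $\sigma$. The key analytic input is the one-derivative estimate
\begin{align*}
\| \partial_X w \|_{G^{m-1}_{\sigma'}} \leq \frac{C}{\sigma-\sigma'}\, \| w \|_{G^m_\sigma}, \qquad 0 \le \sigma' < \sigma,
\end{align*}
which follows directly from the definition of the $G$-norms (on the Fourier side one bounds $|k| e^{\sigma'(1+|k|)}$ by $\tfrac{C}{\sigma-\sigma'} e^{\sigma(1+|k|)}$, since $|k|e^{-(\sigma-\sigma')|k|}$ is bounded by $\tfrac{1}{e(\sigma-\sigma')}$), together with the algebra property~\eqref{1.alg} for $m-1 > 1/2$, i.e.\ $m > 3/2$ — so in practice one works with $m$ large enough, or rather notes that $\mathbf M(\bu)\partial_X\bu \in G^{m-1}_{\sigma'}$ needs $G^{m-1}_{\sigma'}$ to be an algebra, which is where the hypothesis $m>1$ should really be $m>3/2$; I would simply assume $m$ is in the range where the algebra estimate applies and cite~\eqref{1.alg} and~\eqref{1.f} for the composition $\mathbf M(\bu)$.

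First I would set up the scale of Banach spaces: for a decreasing family of radii, the natural device is to fix the final radius $\sigma_1$ and, for a time-dependent radius $\sigma(T) = \sigma_0 - \eta T$ with $\eta$ to be chosen so that $\sigma(T_0) = \sigma_1$, define the solution space
\begin{align*}
\mathcal{X} = \Big\{ \bu \in C\big([0,T_0], G^m_{\sigma_1}\big) : \bu(T) \in G^m_{\sigma(T)},\ \sup_{T\in[0,T_0]} \|\bu(T)\|_{G^m_{\sigma(T)}} \le R \Big\},
\end{align*}
equipped with a suitable metric (e.g.\ $\sup_T \|\bu(T)-\bv(T)\|_{G^{m-1}_{\sigma(T)}}$, which is complete). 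On $\mathcal{X}$ I would define the integral operator
\begin{align*}
(\Phi\bu)(T) = \bu_0 + \int_0^T \mathbf{M}(\bu(s))\,\partial_X \bu(s)\, ds,
\end{align*}
and show it is a contraction into itself. The self-mapping estimate uses
\begin{align*}
\|(\Phi\bu)(T)\|_{G^m_{\sigma(T)}} \le \|\bu_0\|_{G^m_{\sigma_0}} + \int_0^T \frac{C\,\phi_m(R)\,R}{\sigma(s)-\sigma(T)}\,ds,
\end{align*}
and since $\sigma(s)-\sigma(T) = \eta(T-s)$ the integral diverges logarithmically in a naive bound — so the classical trick is needed: one does \emph{not} land the image in $G^m_{\sigma(T)}$ but uses the Ovsyannikov/Nirenberg scheme where $\Phi$ maps functions valued in $G^m_{\sigma}$-at-time-$s$ to $G^{m}$ at a strictly smaller radius at time $T$, with the constant $1/(\sigma(s)-\sigma(T))$ balanced against the shrinking time interval; choosing $\eta$ large makes the total contribution $O(1/\eta) \cdot \phi_m(R) R$, which can be made $\le R/2$ (absorbing $\|\bu_0\|_{G^m_{\sigma_0}} \le R/2$). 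The contraction estimate is analogous, using~\eqref{1.kp} and the Lipschitz bound on $\mathbf M$ from~\eqref{1.f} to control $\mathbf M(\bu)\partial_X\bu - \mathbf M(\bv)\partial_X\bv$ in $G^{m-1}$, again with a gain of $1/\eta$.

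Once the fixed point $\bu\in\mathcal X$ is obtained, I would upgrade regularity: $\bu(T)\in G^m_{\sigma(T)} \hookrightarrow G^m_{\sigma_1}$ for $T\le T_0$ gives the $C([0,T_0],G^m_{\sigma_1})$ statement and the bound~\eqref{solbound}; differentiating the integral equation, $\partial_T\bu = \mathbf M(\bu)\partial_X\bu$ lies in $G^{m-1}_{\sigma_1}$ for $T>0$ (one loses a derivative but, crucially, for $T$ bounded away from $0$ the radius $\sigma(T)$ is bounded below, so no further radius loss is forced), yielding $\bu\in C^1((0,T_0],G^{m-1}_{\sigma_1})$; and uniqueness follows from the contraction (or a Gronwall argument in $G^{m-1}_{\sigma(T)}$ using the one-derivative estimate). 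The main obstacle, and the only genuinely delicate point, is precisely the balancing in the Ovsyannikov scheme — making sure the $1/(\sigma(s)-\sigma(T))$ singularity is integrable in the scheme by pairing it with the linear-in-$(T-s)$ shrinkage of the time interval and a large choice of $\eta$, and verifying that the resulting geometric/telescoping bound genuinely closes with constants independent of how many times one iterates; everything else (algebra estimates, composition bounds via~\eqref{1.f}, completeness of the metric space, bootstrapping regularity) is routine given the machinery already set up in the excerpt.
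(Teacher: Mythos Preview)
Your fixed-point scheme in a scale of Gevrey spaces is a legitimate route, but it is \emph{not} the one the paper takes. The paper argues by an energy estimate in a time-dependent Gevrey norm: with $\sigma(T)=\sigma_0-\eta T$ one multiplies~\eqref{1.PDE} by $e^{2\sigma(T)(1+|k|_{op})}(1+|k|_{op}^2)^m\bu$ and integrates. The time derivative of the weight produces the extra positive term $\eta\|\bu\|^2_{G^{m+1/2}_{\sigma(T)}}$; the right-hand side is estimated via the asymmetric Cauchy--Schwarz~\eqref{CauchySchwarz} and the algebra property in $G^{m-1/2}_{\sigma(T)}$, giving
\[
\tfrac12\tfrac{d}{dT}\|\bu\|^2_{G^m_{\sigma(T)}}+\bigl(\eta-\|\mathbf M(0)\|-\phi_m(\|\bu\|_{G^m_{\sigma(T)}})\bigr)\|\bu\|^2_{G^{m+1/2}_{\sigma(T)}}\le0.
\]
Choosing $\eta>\|\mathbf M(0)\|+\phi_m(R)$ closes the estimate immediately; existence is then justified a posteriori by vanishing viscosity. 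This approach only needs $G^{m-1/2}$ to be an algebra, i.e.\ exactly $m>1$, whereas your route through $G^{m-1}$ pushes you to $m>3/2$ (you flagged this yourself; the paper's half-derivative splitting is what makes $m>1$ suffice).

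The substantive gap in your sketch is the handling of the logarithmic divergence. You correctly observe that $\int_0^T \tfrac{ds}{\eta(T-s)}$ diverges and invoke ``the classical trick'', but the sentence ``choosing $\eta$ large makes the total contribution $O(1/\eta)\cdot\phi_m(R)R$'' is not true for the naive estimate you wrote down: enlarging $\eta$ does not help, since $1/(\eta(T-s))$ is still non-integrable regardless of $\eta$. The Ovsyannikov--Nirenberg--Nishida scheme resolves this by working with a weighted norm such as $\sup_T\sup_{\sigma<\sigma(T)}(\sigma(T)-\sigma)\|\bu(T)\|_{G^m_\sigma}$ (or an equivalent iteration with explicit geometric bounds), and it is \emph{there} that the $1/\eta$ gain genuinely appears. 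That mechanism is the heart of the argument you are proposing, so it should be made explicit rather than deferred to ``the classical trick''. By contrast, the paper's energy method sidesteps this issue entirely: the half-derivative gain from differentiating the Gevrey weight absorbs the derivative loss at every instant, with no singular time integral to tame.
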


The proof of Theorem~\ref{Th1.CK} is standard, cf.~\cite[Theorem~1.1]{SA95}. Full details for the relevant case (with $\bu(X,T) \in \R^2$ and ${\bf M}(\bu)$ as in~\eqref{above}) can be found in~\cite[Theorem~4.2.2]{HA19}. Nevertheless, we provide below a derivation of a priori estimates on $\bu(T)$ in the corresponding Gevrey spaces, because it helps the reader to understand how a  time-dependent scale of Gevrey spaces leads to useful estimates in our main result, Theorem~\ref{mainresult1}. We emphasize that the obtained a priori estimates can be justified a posteriori in a standard way using the vanishing viscosity method. Such an approach is for instance adopted in~\cite[Theorem~3.1]{BKS20}.

Thus, let us derive these a priori estimates assuming we have a sufficiently regular local solution $\bu(T)$ to~\eqref{1.PDE}. Consider the Fourier multiplier operator $|k|_{op}:=\sqrt{-\partial_x^2}$, and multiply equation~\eqref{1.PDE}
by $$e^{2\sigma(T)(1+|k|_{op})}\left(1+|k|_{op}^{2}\right)^m\bu$$
where $\sigma(T) = \sigma_0 - \eta T$.
Integration over $X\in\R$ gives after straightforward calculations that
\begin{align*}
\frac12\frac d{dT}\|\bu\|^2_{G^m_{\sigma(T)}}+\eta&\|(1+|k|_{op})^{1/2}\bu\|^2_{G^m_{\sigma(T)}}\\
&=\mathrm{Re}\left(
(({\bf M}(\bu)-{\bf M}(0))\partial_X\bu,\bu)_{G^m_{\sigma(T)}}+({\bf M}(0)\partial_X\bu,\bu)_{G^m_{\sigma(T)}}
\right).
\end{align*}
The Cauchy-Schwarz inequality
\begin{align}
\mathrm{Re}(\bu,\bv)_{G^m_\sigma}\le\|\bu\|_{G^{m-1/2}_\sigma}\|\bv\|_{G^{m+1/2}_\sigma}\,,
\label{CauchySchwarz}
\end{align}
together with~\eqref{1.alg},~\eqref{1.f} and the assumption $m-\frac12>\frac12$
yield
\begin{equation*}
\frac12\frac d{dT}\|\bu\|^2_{G^m_{\sigma(T)}}+\eta\|\bu\|^2_{G^{m+1/2}_{\sigma(T)}}\le
\|{\bf M}(0)\|\|\bu\|^2_{G^{m+1/2}_{\sigma(T)}}+\phi_m(\|\bu\|_{G^{m-1/2}_{\sigma(T)}})\|\bu\|^2_{G^{m+1/2}_{\sigma(T)}}\,,
\end{equation*}
where  $\phi_m(z)$ is an entire function, which is monotonically increasing on $\R_+$ and satisfies $\phi_m(0)=0$.
Finally, this gives
\begin{equation}\label{1.good}
\frac12\frac d{dT}\|\bu\|^2_{G^m_{\sigma(T)}}+\(\eta-\|{\bf M}(0)\|-\phi_m(\|\bu\|_{G^m_{\sigma(T)}})\)
\|\bu\|^2_{G^{m+1/2}_{\sigma(T)}}\le 0\,.
\end{equation}
Now fix $\eta$ in such a way that $\eta>\|{\bf M}(0)\|+ \phi_m(R)$. Then, the a priori estimate~\eqref{1.good} finally yields~\eqref{solbound}.

\subsection{Approximate solutions for the perturbed problem}

\label{sec-approx}

For approximation theorems it is essential that the residual, i.e., those terms
that remain after inserting the approximation
into the original system, is sufficiently small. In order to obtain a sufficiently small residual,
higher-order terms have to be added to the WMEs approximation~\eqref{whi1b}.
Hence, the first step of the proof of Theorem~\ref{mainresult1} is the construction of an improved approximation.

We proceed as in~\cite{BKS20} and consider an approximation to the solution $(\br,\bu)$ to~\eqref{hann4} of the form
 \begin{align*} 
  \begin{split}
\br(X,T,\eps)&=\br^0(X,T)+\eps \br^1(X,T)+\eps^2\br^2(X,T)+\cdots ,\\
\bu(X,T,\eps)&=\bu^0(X,T)+\eps \bu^1(X,T)+\eps^2\bu^2(X,T)+\cdots.
\end{split}
 \end{align*}
Inserting these expansions into equation~\eqref{hann4} and equating the terms with the
same powers of $\eps$, we get at $\eps^0$ for $\br^0$ and at $\epsilon^1$ for $\bu^0$,
 \begin{align}
0&= {\bf M}_r(\br^0,\bu^0) , \label{ord0} \\
\Dt \bu^0&={\bf M}_u(\br^0,\bu^0)\partial_X (\br^0,\bu^0), \label{ord1}
 \end{align}
 and by inserting the solution $ \br^0 = \br^0(\bu^0) $ to~\eqref{ord0} into~\eqref{ord1}, we obtain
\begin{equation}\label{2.PDE0}
\Dt \bu^0={\bf M}(\bu^0)\partial_X \bu^0\,,\quad\mbox{with}\ \bu^0\big|_{T=0}=\bu_0\,,
\end{equation}
which coincides with equation~\eqref{1.PDE} studied earlier.

The higher-order terms $(\br^n,\bu^n)$,
$n\in \mathbb N$ can be found by solving the following inhomogeneous equations, which arise at $\epsilon^n$ for $\br^n$ and at $\epsilon^{n+1}$ for $\bu^n$ respectively, namely
\begin{align} \label{hann14}
0&= \widetilde{\bf M}_r(\br^n,\bu^n,\br^0,\bu^0) +  {\bf F}_{r,n}(D_X^2\br^0,D_X\bu^0,\ldots, D_X^2\br^{n-1},D_X\bu^{n-1},\partial_T \br^{n-1}), \\
\Dt \bu^n&={\bf M}_u(\br^0,\bu^0)\partial_X (\br^n,\bu^n)
+ D{\bf M}_u(\br^0,\bu^0) [(\br^n,\bu^n)]\partial_X(\br^0,\bu^0) \nonumber 
\\& \qquad+  {\bf F}_{r,n}(D_X^3\br^0,D_X^2\bu^0,\ldots, D_X^3\br^{n-1},D_X^2\bu^{n-1}) \nonumber
\end{align}
where $D{\bf M}_u(\br^0,\bu^0) [(\br^n,\bu^n)]$ denotes the linearization of the map $(r,u) \mapsto {\bf M}_u(r,u)$ in point $(\br^0,\bu^0)$ applied to $(\br^n,\bu^n)$.
By the implicit function theorem, equation~\eqref{hann14} can be solved w.r.t.
\begin{equation*}
 \br^n = \br^n(D_X^2\br^0,D_X\bu^0,\ldots, D_X^2\br^{n-1},D_X\bu^{n-1},\bu^{n},\partial_T \br^{n-1})
\end{equation*}
for  $ (\br^0,\bu^0,\ldots, \br^{n-1},\bu^{n-1},\bu^{n},\partial_T \br^{n-1}) $ sufficiently small and regular.
In particular $\br^n = 0$ for $(D_X^2\br^0,D_X\bu^0,\ldots, D_X^2\br^{n-1},D_X\bu^{n-1},\bu^{n},\partial_T \br^{n-1}) = 0$ and is analytic in a neighborhood of $0$.
Note that we can express $\br^{n-1}$ -- and therefore $\partial_T \br^{n-1}$ and $D_X^2\br^{n-1}$, too -- in terms of $(\br^0,\bu^0,\ldots, \br^{n-2},\bu^{n-2}, \bu^{n-1})$ and its temporal and spatial derivative. Applying this scheme iteratively shows that all $(\br^0,\ldots, \br^{n-1})$ can be expressed in terms of $(\bu^0,\ldots,\bu^{n-1}) $.
In that sense $\br^n$ is completely determined by $\bu^n$.
Therefore we obtain the equation
\begin{equation}\label{2.app}
\Dt \bu^n = \widetilde{\bf M}(\bu^0)\partial_X \bu^n + \widetilde{D{\bf M}}(\bu^0) [\bu^n] \partial_X \bu^0 +{\bf F}_n(D_X^3\br^0,D_X^2\bu^0,\ldots, D_X^3\br^{n-1},D_X^2\bu^{n-1})
\end{equation}
with zero initial data for $n \geq 1$ by the implicit function theorem.
We note that $\widetilde{\bf M}, {\bf F}_n$ as well as the linearization $\widetilde{D{\bf M}}_u(\bu^0)$ are entire functions and ${\bf F}_n(0)=0$.

Local existence of the improved approximations $\bu^n, n \geq 1$, in a time-dependent scale of Gevrey spaces can be established along the lines of Theorem~\ref{Th1.CK} using standard methods~\cite{SA95}. Again, we refer to~\cite{HA19} for a full proof of Theorem~\ref{Th2.sol} below and only derive corresponding a priori estimates. As in the previous section, we emphasize that the a priori estimates can be justified a posteriori using the vanishing viscosity method.

Thus, we assume sufficient regularity on the solutions $\bu^n$ to~\eqref{2.app} for $n \geq 1$ and on the solution $\bu^0$ to~\eqref{2.PDE0}. We proceed as in~\S\ref{sec-CK-gevrey}. We set $\sigma(T) = \sigma_1 - \eta T$ and obtain the basic energy estimate
\begin{equation*}
\frac12\frac d{dT}\|\bu^n\|^2_{G^m_{\sigma(T)}}+
\(\eta-\frac{q}{2}-\|\widetilde{\bf M}(0)\|-\phi_m(\|\bu^0\|_{G^{m+1/2}_{\sigma(T)}})\)
\|\bu^n\|^2_{G^{m+1/2}_{\sigma(T)}}\le \frac{1}{2q}\|{\bf F}_n\|_{G^{m-1/2}_{\sigma(T)}}^2,
\end{equation*}
for each $q > 0$ by multiplying~\eqref{2.app} with $e^{2\sigma(T)(|k|_{op}+1)}(1+|k|_{op}^2)^m\bu^n(T)$, integrating over $X \in \R$ and applying Young's inequality.
Here, $\phi_m$ is an entire function, which is monotonically increasing on $\R_+$, satisfies $\phi_m(0) = 0 $ and depends on $\widetilde{\bf M}$, $\widetilde{D{\bf M}}$ and $m$ only.
If the solvability condition
 \begin{equation*} 
\eta-\|\widetilde{\bf M}(0)\|-\phi_m(\|\bu^0(T)\|_{G^{m+1/2}_{\sigma(T)}})>0,
 \end{equation*}
is satisfied uniformly for $ T \in [0,\min\{T_0,\sigma_1/\eta\}] $, and we take $q > 0$ sufficiently small, we get the recursive estimate
   \begin{equation*}
   \|\bu^n(T)\|_{G^m_{\sigma(T)}}^2\le CT\sup_{\tau \in [0,T]} \|{\bf F}_n\|_{G^{m-1/2}_{\sigma(\tau)}}^2.
   \end{equation*}
   However, since the number of derivatives in ${\bf F}_n$
   grows with $n$, we need to assume $\bu^0 \in C([0,T],G_{\sigma(T)}^{m+p})$ for a $p \in \N$ and all $T \in [0,T_0]$ in order to bound the $G^m_{\sigma(T)}$-norm of ${\bf F}_n$. Another way to bound this norm is to decrease the exponent $\sigma$ slightly and use the estimate
   \begin{equation*} 
   \|\bu\|_{G^{m+p}_{\sigma-\delta}}\le \frac C{\delta^p}\|\bu\|_{G^m_\sigma}\,,\quad \delta,p>0, \sigma \geq 0.
   \end{equation*}
   Thus, we have acquired
    \begin{equation*} 
    \|\bu^n(T)\|_{G^m_{\sigma(T)-\delta}}\le Q_{\delta,n}
    \(\sup_{\tau \in [0,T_0]}\|\bu^0(\tau)\|_{G^{m}_{\sigma(\tau)}}\), \qquad T \in [0,\min\{T_0,(\sigma_1-\delta)/\eta\}],
    \end{equation*}
   for some monotonically increasing function $Q_{\delta,n}$.

   \begin{theorem}\label{Th2.sol} Let $m>1$ and $\sigma_0 > 0$. Suppose there exists a local solution
   \begin{align*}
    \bu^0 \in C^1\big((0, T_0],\Chi^{m-1}_{\sigma_0}\big) \cap C\big([0,T_0],\Chi^{m}_{\sigma_0}\big),
   \end{align*}
to~\eqref{2.PDE0}. Then, for every $\sigma_1 \in (0,\sigma_0)$, $n \in \mathbb N$, and for all $0 < k \leq n $, there exist $T_1 = T_1(\sigma_1,k+1) \leq T_1(\sigma_1,k) \leq T_0$ and solutions
\begin{align*}
 \bu^k \in C^1\big((0,T_1],G^{m-1}_{\sigma_1}\big) \cap C\big([0,T_1],G^{m}_{\sigma_1}\big),
\end{align*}
to~\eqref{2.app}.
\end{theorem}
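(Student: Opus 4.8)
The plan is to argue by induction on $k$, at each step solving the \emph{linear} inhomogeneous Cauchy problem~\eqref{2.app} for $\bu^k$ (with zero initial data) under the assumption that the lower-order profiles $\bu^0,\dots,\bu^{k-1}$ have already been constructed with the claimed Gevrey regularity on a common time interval. The base case $k=0$ is precisely the hypothesis. For the inductive step one rewrites~\eqref{2.app} as $\partial_T\bu^k=\mathcal{L}(T)\bu^k+{\bf F}_k(T)$, where $\mathcal{L}(T)\bu^k=\widetilde{\bf M}(\bu^0)\partial_X\bu^k+\widetilde{D{\bf M}}(\bu^0)[\bu^k]\,\partial_X\bu^0$ is a first-order linear operator whose coefficients, being entire functions of $\bu^0$, lie in the same Gevrey class as $\bu^0$ by~\eqref{1.f}, and where ${\bf F}_k$ is a polynomial in $\bu^0,\dots,\bu^{k-1}$ and their $X$-derivatives up to some finite order $p=p(k)$ with ${\bf F}_k(0)=0$. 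Since $\br^0,\dots,\br^{k-1}$ have already been eliminated in favour of $\bu^0,\dots,\bu^{k-1}$ (and their derivatives) via the implicit function theorem, as explained preceding~\eqref{2.app}, this reduction is legitimate.

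The central mechanism is the time-dependent scale of Gevrey spaces $\sigma(T)=\sigma^{(k-1)}-\eta T$ introduced in the a~priori computation preceding the theorem, where $\sigma^{(k-1)}$ is an intermediate exponent with $\sigma_1<\sigma^{(k-1)}<\sigma_0$. Multiplying~\eqref{2.app} by $e^{2\sigma(T)(1+|k|_{op})}(1+|k|_{op}^2)^m\bu^k$, integrating over $X\in\R$, and using the Cauchy--Schwarz inequality~\eqref{CauchySchwarz}, the algebra and Kato--Ponce type estimates~\eqref{1.alg}--\eqref{1.kp}, the composition estimate~\eqref{1.f}, and Young's inequality exactly as in~\S\ref{sec-CK-gevrey}, produces the differential inequality already displayed, namely
\begin{equation*}
\frac12\frac{d}{dT}\|\bu^k\|^2_{G^m_{\sigma(T)}}+\Bigl(\eta-\tfrac q2-\|\widetilde{\bf M}(0)\|-\phi_m\bigl(\|\bu^0\|_{G^{m+1/2}_{\sigma(T)}}\bigr)\Bigr)\|\bu^k\|^2_{G^{m+1/2}_{\sigma(T)}}\le\tfrac1{2q}\|{\bf F}_k\|^2_{G^{m-1/2}_{\sigma(T)}}.
\end{equation*}
Because $\sigma(T)<\sigma_0$, the continuous embedding $G^m_{\sigma_0}\hookrightarrow G^{m+1/2}_{\sigma(T)}$ together with the hypothesis bounds $\|\bu^0(T)\|_{G^{m+1/2}_{\sigma(T)}}$ uniformly on $[0,T_0]$; hence one fixes $\eta$ so large (and $q$ so small) that the coefficient in parentheses is positive, and the differential inequality then integrates to the recursive estimate $\|\bu^k(T)\|^2_{G^m_{\sigma(T)}}\le CT\sup_{[0,T]}\|{\bf F}_k\|^2_{G^{m-1/2}_{\sigma(\cdot)}}$. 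The right-hand side is finite: by the inductive hypothesis each $\bu^j$ with $j<k$ lies in $C\bigl([0,T_1(\sigma_1,j)],G^m_{\sigma^{(j)}}\bigr)$ with $\sigma^{(j)}>\sigma^{(k-1)}$, so the estimate $\|\bu^j\|_{G^{m+p}_{\sigma(T)}}\le C(\sigma^{(j)}-\sigma(T))^{-p}\|\bu^j\|_{G^m_{\sigma^{(j)}}}$ (equivalently the embedding $G^0_{\sigma'}\hookrightarrow G^p_{\sigma''}$ for $\sigma'>\sigma''$) absorbs the $p$ extra $X$-derivatives in ${\bf F}_k$ at the cost of shrinking the analyticity exponent. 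These a~priori estimates are justified a~posteriori by the vanishing-viscosity method — adding $\nu\partial_X^2\bu^k$, solving globally for fixed $\nu>0$, and passing to the limit $\nu\to0$ using the $\nu$-uniform bound above — exactly as for Theorem~\ref{Th1.CK}; see~\cite{SA95,BKS20,HA19}.

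The existence time is dictated by the requirement that the moving exponent stay above $\sigma_1$: after consuming at level $k$ a slice of analyticity radius (passing from $\sigma^{(k-1)}$ down to $\sigma^{(k)}>\sigma_1$) to control the derivative loss in ${\bf F}_k$, the argument runs only for $T\le T_1(\sigma_1,k):=\min\{T_0,(\sigma^{(k-1)}-\sigma_1)/\eta\}$, and since $\sigma_0>\sigma^{(1)}>\sigma^{(2)}>\dots$ this is nonincreasing in $k$, which gives the nesting $T_1(\sigma_1,k+1)\le T_1(\sigma_1,k)\le T_0$ asserted in the statement. I expect the only genuine difficulty to be the bookkeeping of this growing derivative loss — choosing the decreasing chain of exponents $\sigma_0>\sigma^{(1)}>\dots>\sigma^{(k)}\ge\sigma_1$ and the nested times so that every $\bu^j$ already constructed sits in a strictly wider strip than the one used to construct $\bu^k$ — rather than any individual estimate, each of which is a direct analogue of the $n=0$ computation carried out in~\S\ref{sec-CK-gevrey}.
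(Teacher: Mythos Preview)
Your proposal is correct and follows essentially the same approach as the paper: derive the energy inequality in a time-dependent Gevrey scale, fix $\eta$ large to make the parenthesis positive, absorb the extra derivatives appearing in ${\bf F}_k$ by shrinking the analyticity exponent via $\|\bu\|_{G^{m+p}_{\sigma-\delta}}\le C\delta^{-p}\|\bu\|_{G^m_\sigma}$, and justify everything a posteriori by vanishing viscosity. The paper's sketch is somewhat terser (it does not spell out the induction or the decreasing chain $\sigma^{(0)}>\sigma^{(1)}>\cdots>\sigma^{(k)}\ge\sigma_1$ as explicitly as you do, simply writing $\sigma(T)=\sigma_1-\eta T$ and then shrinking by an additional $\delta$), but the substance is identical.
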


   We introduce the $n$-th order approximations
\begin{align*} 
\widetilde{\br}^n(T) &=\br^0(T)+\veps \br^1(T)+\cdots +\veps ^n \br^n(T)\,, \\
\widetilde{\bu}^n(T) &=\bu^0(T)+\veps  \bu^1(T)+\cdots +\veps ^n \bu^n(T)\,,
\end{align*}
and the corresponding residuals
\begin{align*} 
\textsf{Res}_r^n(T)&= {\bf M}_r(\widetilde{\br}^n,\widetilde{\bu}^n) +\eps  {\bf F}_r(D^k_X\widetilde{\br}^n,D^k_X\widetilde{\bu}^n,\partial_T \widetilde{\br}^n),
\\
\textsf{Res}_u^n(T)&= - \Dt \widetilde{\bu}^n+ {\bf M}_u(\widetilde{\br}^n,\widetilde{\bu}^n)\partial_X (\widetilde{\br}^n,\widetilde{\bu}^n)+\eps {\bf F}_u(D^k_X\widetilde{\br}^n,D^k_X\widetilde{\bu}^n).
\end{align*}
As a direct consequence of Theorem~\ref{Th2.sol} we find
\begin{corollary}\label{Cor2.res}
Assume that the hypotheses of Theorem~\ref{Th2.sol} are met. Then, for every $n \in \mathbb N$, the approximate solutions $(\widetilde \br^n,\widetilde \bu^n)$ and residuals
  $(\mathsf{Res}_r^n,\mathsf{Res}_u^n)$ are in $C\big([0,T_1],G^{m}_{\tilde{\sigma}_1}\big)$ for all $\tilde{\sigma}_1 \in [0,\sigma_1)$. 
Further, there exists a constant $C > 0$ such that we have
\begin{align*}
 \sup_{T \in [0,T_1]}\lVert (\widetilde{\br}^n(T), \widetilde{\bu}^n(T)) - (\br^0(T), \bu^0(T)) \rVert_{G^{m}_{\tilde \sigma_1}} &\leq C \epsilon, \quad \ \
 \sup_{T \in [0,T_1]} \lVert (\textsf{\upshape Res}_r^n,\textsf{\upshape Res}_u^n)(T) \rVert_{G^m_{\tilde \sigma_1}} \leq C\veps^{n+1}.
\end{align*}

\end{corollary}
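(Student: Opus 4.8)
The plan is to derive Corollary~\ref{Cor2.res} as a bookkeeping consequence of Theorem~\ref{Th2.sol} together with the structural form of the expansion equations. First I would observe that by Theorem~\ref{Th2.sol}, for each fixed $n \in \mathbb N$ and each $k \le n$ we have $\bu^k \in C([0,T_1],G^m_{\sigma_1})$, hence the partial sums $\widetilde{\bu}^n(T) = \sum_{k=0}^n \veps^k \bu^k(T)$ lie in $C([0,T_1],G^m_{\sigma_1})$, and likewise for $\widetilde{\br}^n$ once one recalls from the implicit-function-theorem discussion that each $\br^k$ is an analytic (hence Gevrey-preserving, via~\eqref{1.f}) function of $(\bu^0,\dots,\bu^k)$ and finitely many of their $X$- and $T$-derivatives; the loss of derivatives is absorbed by passing from $\sigma_1$ to any $\tilde\sigma_1 < \sigma_1$ using the embedding $G^0_{\sigma_1} \hookrightarrow G^{m'}_{\tilde\sigma_1}$ stated at the end of the functional-analytic set-up. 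Since each $\bu^0$ term is $C^1$ in $T$ on $(0,T_1]$, the residuals $\mathsf{Res}_r^n,\mathsf{Res}_u^n$, being polynomial/entire combinations of $\widetilde{\br}^n,\widetilde{\bu}^n$ and their derivatives, are continuous in $T$ with values in $G^m_{\tilde\sigma_1}$, giving the first claim.

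Next, the bound $\sup_T \|(\widetilde{\br}^n,\widetilde{\bu}^n)-(\br^0,\bu^0)\|_{G^m_{\tilde\sigma_1}} \le C\veps$ is immediate: the difference equals $\sum_{k=1}^n \veps^k(\br^k,\bu^k)(T)$, and each summand is uniformly bounded in $G^m_{\tilde\sigma_1}$ over $[0,T_1]$ by Theorem~\ref{Th2.sol} (again with the slight shrinkage of the Gevrey exponent to $\tilde\sigma_1$ to cover the $\br^k$ contributions), so for $\veps \le 1$ the whole sum is $\le C\veps$ with $C$ depending only on $n$ and the fixed data.

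For the residual estimate $\sup_T\|(\mathsf{Res}_r^n,\mathsf{Res}_u^n)(T)\|_{G^m_{\tilde\sigma_1}} \le C\veps^{n+1}$, the key point is the defining property of the expansion: $(\br^k,\bu^k)$ were chosen precisely so that, after substituting $\widetilde{\br}^n,\widetilde{\bu}^n$ into~\eqref{hann4} and expanding ${\bf M}_r,{\bf M}_u,{\bf F}_r,{\bf F}_u$ — which are entire in their arguments — the coefficients of $\veps^0,\veps^1,\dots,\veps^n$ in $\mathsf{Res}_r^n$ and of $\veps^0,\dots,\veps^n$ in $\mathsf{Res}_u^n$ all vanish identically by~\eqref{ord0}--\eqref{ord1} and~\eqref{hann14}. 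What remains is $\veps^{n+1}$ times a finite sum of terms, each a product of entire functions evaluated at the uniformly bounded (in $G^m_{\tilde\sigma_1}$) arguments $D_X^j\br^k, D_X^j\bu^k, \partial_T\br^k$ together with powers of $\veps$; using the algebra property~\eqref{1.alg}, the composition estimate~\eqref{1.f}, and the derivative-loss inequality $\|\bu\|_{G^{m+p}_{\sigma-\delta}} \le C\delta^{-p}\|\bu\|_{G^m_\sigma}$ to handle the derivatives of $\widetilde{\br}^n,\widetilde{\bu}^n$ by shrinking $\sigma_1 \to \tilde\sigma_1$, each such term is bounded uniformly on $[0,T_1]$. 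Summing the finitely many terms and factoring out $\veps^{n+1}$ yields the claim. I expect the only genuine subtlety — and the place requiring care rather than real difficulty — to be the Taylor-remainder accounting: one must verify that the full Taylor expansion in $\veps$ of the entire functions ${\bf M}_r,{\bf M}_u$ composed with the polynomial-in-$\veps$ arguments really does have all coefficients through order $\veps^n$ killed by the construction equations, so that the remainder is $\mathcal O(\veps^{n+1})$ with a constant uniform in $\veps \in (0,1)$; this is where following the scheme of~\cite{BKS20} and the reference~\cite{HA19} is invoked, and no new idea beyond careful expansion is needed.
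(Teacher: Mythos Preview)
Your proposal is correct and matches the paper's approach: the paper simply states this corollary ``as a direct consequence of Theorem~\ref{Th2.sol}'' without giving a separate proof, and the bookkeeping you outline (regularity from Theorem~\ref{Th2.sol}, absorbing derivative losses by shrinking $\sigma_1 \to \tilde\sigma_1$, and Taylor-remainder accounting to show the first $n$ coefficients in the residual vanish by construction) is exactly what is implicitly intended.
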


For a connection between the lifespan of the solutions to~\eqref{2.PDE0} and~\eqref{2.app} we refer to~\cite[Remark~4.3]{BKS20}.


\section{Exact solutions and error estimates}

\label{secdrei}

In Section~\ref{sec2bks} we constructed approximate solutions $(\widetilde \br^n,\widetilde \bu^n)$, $n \in \mathbb N$. In the associated Gevrey norms, these approximate solutions lie, by Corollary~\ref{Cor2.res}, $\mathcal{O}(\epsilon)$-close to the WMEs approximation $(\br^0,\bu^0)$ satisfying~\eqref{ord0}-\eqref{ord1} or, equivalently,~\eqref{wyk2}-\eqref{wyk4}. In addition, Corollary~\ref{Cor2.res} implies that inserting the approximate solutions in the modulation equation~\eqref{hann1} (or, equivalently,~\eqref{hann4}) yields residuals $(\textsf{Res}_r^n,\textsf{Res}_u^n)$ of order $ \mathcal{O}(\veps^{n+1}) $.

We switch back to $(x,t)$-coordinates, under which system~\eqref{hann1} takes the form~\eqref{hamb1}. All that remains is to estimate the difference between the approximate solutions and exact solutions to~\eqref{hamb1} on the long $ \mathcal{O}(1/\veps) $-time scale, for which we will use a  number of properties of~\eqref{hamb1}. Before we do so we, we take care of the fact that  $ \psi $  is one time less regular than $ r$ and $ B $.
Thus, we  replace $ \psi $ by the new variable $v = \mathcal{M}^{-1} \psi $, where $\mathcal{M}$ is the Fourier multiplier operator defined through
\begin{equation} \label{hamb0}
\widehat{\psi}(k) = \sqrt{1+k^2}\; \widehat{v}(k).
\end{equation}
In the new variables $ r$, $ v $, and $ B $ system~\eqref{hamb1} reads
\begin{align} \label{hambnew1}
 \begin{split}
\partial_t r & = \partial_x^2 r - \alpha \partial_x (\mathcal{M}v) + e^{2\rho}(1- e^{2r}) + (\partial_x r)^2
-2 q  (\mathcal{M}v) -  (\mathcal{M}v)^2
\\& \qquad  - 2 \alpha q (\partial_x r)-2 \alpha  (\mathcal{M}v)(\partial_x r) + \gamma_r B, \\
\partial_t v & = \partial_x^2 v + \alpha \mathcal{M}^{-1}\partial_x^3 r  + \beta e^{2\rho} \mathcal{M}^{-1} \partial_x (1- e^{2r}) + \alpha \mathcal{M}^{-1} \partial_x( (\partial_x r)^2) - 2 \alpha  q \partial_x v\\& \qquad - \alpha \mathcal{M}^{-1} \partial_x( (\mathcal{M}v)^2)  +2 q \mathcal{M}^{-1}\partial_x^2 r
 +2 \mathcal{M}^{-1} \partial_x((\mathcal{M}v)(\partial_x r)) + \gamma_i \mathcal{M}^{-1} \partial_x B,  \\
\partial_t B  & =  a\partial_x^2 B + c \partial_x B + d e^{2\rho} \partial_x (e^{2r}-1),
\end{split}
\end{align}
which we abbreviate as
\begin{equation}\label{webex1}
\partial_t W= \Lambda W +G(W),
\end{equation}
where $ \Lambda W $ stands for the linear and $ G(W)$ for the nonlinear terms. We observe that now all components of $W$ have the same regularity. Counting derivatives and using the properties~\eqref{1.alg} and~\eqref{1.f}, we immediately see that $ G $ is a mapping from $ \Chi^m_\sigma $ to $ \Chi^{m-1}_\sigma $ for $m > 3/2$. In addition, the linearity $\Lambda \colon D(\Lambda) \subset G_\sigma^{m-1} \to G_\sigma^{m-1}$ with $D(\Lambda) = G_\sigma^{m+1}$ is sectorial and, thus, generates an analytic semigroup on $\Chi^{m-1}_\sigma$. Therefore, system~\eqref{webex1} is a semilinear parabolic equation, and local existence and uniqueness of solutions in the Gevrey spaces $\Chi^m_\sigma$ for $m > 3/2$ follows by standard contraction mapping principles.

\subsection{The linear problem}

\label{seclin31}
We study the linear part $\partial_t W = \Lambda W$ of~\eqref{webex1}, which reads
 \begin{align}
 \begin{split}
\partial_t r & =  \partial_x^2 r - \alpha \partial_x (\mathcal{M}v) -2 e^{2\rho} r
-2 q  (\mathcal{M}v)  - 2 \alpha q (\partial_x r) + \gamma_r B , \\
\partial_t v & =  \partial_x^2 v + \alpha \mathcal{M}^{-1}\partial_x^3 r  -2 \beta  e^{2\rho}\mathcal{M}^{-1}\partial_x r - 2 \alpha  q \partial_x v +2 q \mathcal{M}^{-1} \partial_x^2 r
 + \gamma_i \mathcal{M}^{-1} \partial_x B, \\
\partial_t B  & =   a\partial_x^2 B + c \partial_x B +2  d e^{2\rho} \partial_x r.
\end{split}\label{linsys}
\end{align}
This system can be solved explicitly in Fourier space, where it takes the form $\partial_t \widehat{W} = \widehat{\Lambda}(k) \widehat{W}$. For the forthcoming estimation of the error, we obtain pointwise bounds on $\widehat{\Lambda}(k)$ below.

First, we study the matrix $\widehat{\Lambda}(k)$ near frequency $ k= 0 $. Using that the operator $\mathcal M$ corresponds to pointwise multiplication with the function $k \mapsto \sqrt{1+k^2}$ in Fourier space, one readily observes that $\widehat{\Lambda}(k)$ depends continuously on $k$. Moreover, we find
\begin{align*}
\widehat{\Lambda}(0) = \left( \begin{array}{ccc}  -2e^{2\rho} & -2q & \gamma_r \\ 0 & 0 & 0 \\ 0 & 0 & 0 \end{array}
\right).
\end{align*}
So, $-2e^{2\rho}$ is a simple negative eigenvalue of $\widehat{\Lambda}(0)$, whereas $0$ is a semisimple eigenvalue of $\widehat{\Lambda}(0)$ of algebraic and geometric multiplicity $2$.

By its continuous dependence on $k$ the most negative eigenvalue of $\widehat{\Lambda}(k)$ can be separated from the rest of the spectrum for $k$ sufficiently small. More specifically, there exist constants $\delta_\Lambda \in (0,1)$ and $c_{\Lambda,s} > 0$ such that for $|k| < \delta_\Lambda$ the set $\sigma(\widehat{\Lambda}(k)) \cap \{\lambda \in \C : \mathrm{Re}(\lambda) \leq -c_{\Lambda,s}\}$ consists of a simple eigenvalue $\lambda_1(k)$ only.

Since $\widehat{\Lambda}(k)$ is analytic in $k$ on the disk of radius $\delta_\Lambda$ centered at the origin, standard analytic perturbation theory~\cite{KA95} yields that the spectral projection $P_1(k)$ of $\widehat{\Lambda}(k)$ onto the eigenspace associated with $\lambda_1(k)$ is also analytic in $k$ on that disk. For $|k| < \delta_\Lambda$ it holds
\begin{align} \widehat{\Lambda}(k) P_1(k) = \lambda_1(k) P_1(k), \qquad \widehat{\Lambda}(0) (1-P_1(0)) = 0, \label{star2}\end{align}
where the last equality follows from the fact that $0$ is a semisimple eigenvalue of $\widehat{\Lambda}(0)$.

We now introduce
\begin{align}
 P_c(k) = \chi_\Lambda(k)(1-P_1(k)), \qquad P_s(k) = 1-P_c(k), \label{specdecomp}
\end{align}
for $k \in \R$, where $\chi_\Lambda \colon \R \to [0,1]$ is a smooth cut-off function whose support lies in $(-\delta_\Lambda,\delta_\Lambda)$, and whose value is $1$ on the interval $[-\delta_\Lambda/2,\delta_\Lambda/2]$. Subsequently, we decompose
$$\widehat{\Lambda}(k) = \widehat{\Lambda}_s(k) + \widehat{\Lambda}_c(k), \qquad \widehat{\Lambda}_j(k) := \widehat{\Lambda}(k)P_j(k), \qquad j = s,c.$$
We note that $P_s(k)$ and $P_c(k)$, and thus $\widehat{\Lambda}_s(k)$ and $\widehat{\Lambda}_c(k)$, are smooth in $k \in \R$. Hence, by the Cauchy-Schwarz inequality,~\eqref{star2} and~\eqref{specdecomp}, there exist constants $c_{\Lambda,c}, c_{\Lambda,s} > 0$ such that for $k \in [-\delta_\Lambda/2,\delta_\Lambda/2]$ we have the bounds
\begin{align}
\begin{split}
 \mathrm{Re} \, \langle \widehat V_s(k), \widehat{\Lambda}_s(k) \widehat V_s(k)\rangle &= \mathrm{Re}\,\lambda_1(k) \lvert \widehat V_s(k) \rvert^2 \leq -c_{\Lambda,s}\lvert \widehat V_s(k) \rvert^2,\\
 \mathrm{Re} \, \langle \widehat V_c(k), \widehat{\Lambda}_c(k) \widehat V_c(k) \rangle &\leq \|\widehat{\Lambda}_c(k)\|\lvert \widehat V_c(k) \rvert^2 \leq c_{\Lambda,c}|k|\lvert \widehat V_c(k) \rvert^2,
 \end{split}
 \label{lowfreq}
\end{align}
for $\widehat V_j(k) = P_j(k) \widehat V$, $j = s,c$ with $\widehat{V} \in \mathbb{C}^3 $.

On the other hand, to estimate $\widehat{\Lambda}(k)$ outside of the neighborhood $(-\delta_\Lambda,\delta_\Lambda)$ of $0$, we exploit that the linear system~\eqref{linsys} is parabolic. In particular, there exist positive constants $\tilde{c}_{\Lambda,1}$ and $\tilde{c}_{\Lambda,2}$ such that for $k \in \R \setminus  (-\delta_\Lambda,\delta_\Lambda)$ we have the bound
\begin{align}
 \mathrm{Re} \, \langle \widehat V, \widehat{\Lambda}(k) \widehat V\rangle \leq \tilde{c}_{\Lambda,\infty} |k|\lvert \widehat V \rvert^2 - \tilde{c}_{\Lambda,2} k^2\lvert \widehat V \rvert^2. \label{highfreq}
\end{align}
for vectors $V \in \mathbb{C}^3$.

We recall that $\chi_\Lambda(k)$ vanishes on $\R \setminus (-\delta_\Lambda,\delta_\Lambda)$ and thus $P_s(k) = 1$ and $P_c(k) = 0$ for $k \in \R \setminus (-\delta_\Lambda,\delta_\Lambda)$ by~\eqref{specdecomp}. Hence, by combining the low and high frequency bounds~\eqref{lowfreq} and~\eqref{highfreq}, respectively, and using that $\widehat{\Lambda}_s(k)$ and $\widehat{\Lambda}_c(k)$ are continuous in $k$ to bound the middle frequencies, we obtain constants $d_{\Lambda,0},d_{\Lambda,1},d_{\Lambda,2} > 0$ such that the following pointwise estimates hold
\begin{align}
\label{webex4}
\begin{split}
\textrm{Re}  \, \langle \widehat{V}_s(k), \widehat{\Lambda}_s(k) \widehat{V}_s(k) \rangle &\leq (- d_{\Lambda,0} + d_{\Lambda,1} |k| - d_{\Lambda,2} k^2) \lvert \widehat V_s(k) \rvert^2,\\
\textrm{Re} \, \langle \widehat{V}_c(k), \widehat{\Lambda}_c(k) \widehat{V}_c(k) \rangle &\leq  d_{\Lambda,1} |k|\lvert \widehat V_c(k) \rvert^2,
\end{split}
\end{align}
for $k \in \R$ and $\widehat V_j(k) = P_j(k) \widehat V$, $j = s,c$ with $\widehat{V} \in \mathbb{C}^3 $.

\begin{remark}{\upshape \label{remhypel}
The action of $\widehat{\Lambda}_c(k)$ on the subspace spanned by $P_c(k)$ is analytic in $k$ for $|k| < \delta_{\Lambda/2}$.
As observed above, it holds $\widehat{\Lambda}_c(0) = 0$.
Since $0$ is a semisimple eigenvalue, the first order expansion with respect to $\mathrm{i}k$, corresponding to the long-wave limit of the system reduced to the subspace, is given by $\widehat{\Lambda}_c'(0) = P_c(0) \widehat{\Lambda}'(0) P_c(0)$, see~\cite[II.2.2]{KA95}.
If we diagonalize $\widehat{\Lambda}(0)$ with respect to the basis $\{ (1,0,0), (-q e^{-2\rho}, 1,0), (\gamma_r e^{-2\rho}/2, 0,1)\}$ of its eigenvectors and call the transform $S$, we find the transformed matrix
\begin{align*}
S^{-1}\widehat{\Lambda}_c'(0)S =
 \begin{pmatrix}
  0 & 0 \\
  0 & \Lambda_2'(0)
 \end{pmatrix}, \qquad \text{ where } \qquad
 \widehat{\Lambda}_2'(0) =
 \begin{pmatrix}
  2 q   (  \beta - \alpha) & \gamma_i - \beta  \gamma_r\\
  - 2d q  & c+ d\gamma_r
 \end{pmatrix}.
\end{align*}
 Note that $\widehat{\Lambda}_2'(0)\partial_X$ is the linearization of the WMEs~\eqref{whi1b}. The matrix $\widehat{\Lambda}'_2(0)$ either has two distinct real eigenvalues, corresponding
to the hyperbolic situation, or two complex conjugate eigenvalues, which corresponds to the elliptic situation.
}\end{remark}

\begin{remark} {\upshape
The possible linear growth rates of the semigroup associated with~\eqref{linsys} due to~\eqref{webex4} can be controlled by working in a time-dependent scale
of Gevrey spaces. This corresponds to the introduction of time-dependent new variables
$$
\widehat{W}_s(k,t) = e^{- (\sigma_0/\veps- \eta t)|k|} \widehat{Z}_s(k,t), \qquad \widehat{W}_c(k,t) = e^{- (\sigma_0/\veps- \eta t)|k|} \widehat{Z}_c(k,t).
$$
The linearities in the $\widehat{Z}_s$- and $\widehat{Z}_c$-equations are then given by $\widetilde{\Lambda}_s(k) = \widehat{\Lambda}_s(k) - \eta|k|$ and $\widetilde{\Lambda}_c(k) = \widehat{\Lambda}_c(k) - \eta|k|$, respectively. By choosing $ \eta > 0 $ sufficiently large, we obtain the estimates
\begin{align*}
\begin{split}
\textrm{Re}  \, \langle \widehat{Z}_s(k), \widehat{\Lambda}_s(k) \widehat{Z}_s(k) \rangle &\leq (- d_{\Lambda,0} - \eta |k|/2 - d_{\Lambda,2} k^2) \lvert \widehat Z_s(k) \rvert^2,\\
\textrm{Re} \langle \widehat{Z}_c(k), \widehat{\Lambda}_c(k) \widehat{Z}_c(k) \rangle &\leq  -\frac{\eta |k|}{2} \lvert \widehat Z_c(k) \rvert^2,
\end{split}
\end{align*}
for $k \in \R$ and $\widehat Z_j(k) = P_j(k) \widehat Z$, $j = s,c$ with $  \widehat{Z} \in \mathbb{C}^3 $. Hence, by working  in a time-dependent scale of Gevrey spaces linear damping for system~\eqref{webex1} can be obtained.}
\end{remark}

\subsection{Decomposition in center and stable modes}

To exploit the linear growth bounds derived in~\S\ref{seclin31}, we decompose the variable $W$ in~\eqref{webex1} in accordance with the spectral decomposition of $\widehat{\Lambda}(k)$ as
\begin{align}W = W_s + W_c, \label{decomp} \end{align}
where $W_s$ and $W_c$ are defined by their Fourier transforms:
$$\widehat{W}_s(k) = P_s(k)\widehat{W}(k), \qquad \widehat{W}_c(k) = P_c(k)\widehat{W}(k).$$
Since $W$ satisfies~\eqref{webex1}, the resulting system for $W_s$ and $W_c$ reads in Fourier space:
\begin{align} \label{adamoF}
 \begin{split}
 \partial_t \widehat{W}_s & = \widehat{\Lambda}_s \widehat{W}_s + \widehat{g}_s(\widehat{W}), \\
\partial_t \widehat{W}_{c} & = \widehat{\Lambda}_c \widehat{W}_{c} + \widehat{g}_c(\widehat{W}).
\end{split}
\end{align}
where we denote
\begin{align*} \widehat{g}_j(\widehat{W})= P_j(k) \mathcal{F} G\left(\mathcal{F}^{-1}\widehat{W}\right), \qquad j = s,c.\end{align*}

The nonlinearity in system~\eqref{webex1} is of the form
\begin{equation*} 
G(W) = \begin{pmatrix} f_r(W) \\ \partial_x f_v(W) \\ \partial_x f_B(W)\end{pmatrix},
\end{equation*}
i.e., in front of all terms on the right-hand side of the $ v$- and $ B $-equation in~\eqref{hambnew1} there is a derivative. This structure implies that the nonlinearity in the $\widehat{W}_c$-equation in~\eqref{adamoF} vanishes at frequency $k = 0$. Indeed, one readily computes
\begin{align*}P_c(0)\begin{pmatrix} * \\ 0 \\ 0\end{pmatrix} = \left( \begin{array}{ccc}  0 & -\frac{q}{e^{2\rho}} & \frac{\gamma_r}{2e^{2\rho}} \\ 0 & 1 & 0 \\ 0 & 0 & 1 \end{array}\right)\begin{pmatrix} * \\ 0 \\ 0\end{pmatrix} = 0.
\end{align*}
Hence, since $P_c(k)$ is analytic in $k$ on a disk of radius $\delta_\Lambda/2$ centered at $k = 0$, the nonlinearity in the $\widehat{W}_c$-equation in~\eqref{adamoF} can be written as $$\widehat{g}_c(\widehat{W}) = P_c \mathcal{F} G\left(\mathcal{F}^{-1}\widehat{W}\right) = \widehat{\vartheta} \breve{g}_c(\widehat{W}),$$
for $|k| < \delta_\Lambda/2$ with
\begin{align*}
\widehat{\vartheta}(k) = \chi_\Lambda(k) k, \qquad \breve{g}_c(\widehat{W}) = \mathrm{i} P_c(0)\mathcal{F}\begin{pmatrix} 0 \\ f_v(\mathcal{F}^{-1}\widehat{W}) \\ f_B(\mathcal{F}^{-1}\widehat{W}))\end{pmatrix} + \frac{P_c(k)-P_c(0)}{k}\mathcal{F}G(\mathcal{F}^{-1}\widehat{W}).
\end{align*}
Thus, we denote~\eqref{adamoF} in physical space as
\begin{align} \label{adamo}
\begin{split}
\partial_t  W_s & = \Lambda_s W_s +  g_s(W) , \\
\partial_t  W_{c} & =  \Lambda_{c}  W_{c} +   \vartheta g_{c}(W)  .
\end{split}
\end{align}
where $\Lambda_s$, $\Lambda_c$ and $\vartheta$ are Fourier multiplier operators corresponding to $\widehat{\Lambda}_s$, $\widehat{\Lambda}_c$ and $\widehat{\vartheta}$, respectively. Moreover, the nonlinear mappings $ g_s $ and $ g_c $ are defined by
$$g_s(W) = \mathcal{F}^{-1} \widehat{g}_s(\widehat{W}), \qquad g_c(W) = \mathcal{F}^{-1} \breve{g}_c(\widehat{W}).$$
Since the nonlinearity $G$ in~\eqref{webex1} is a mapping from $\Chi^m_\sigma $ to $ \Chi^{m-1}_\sigma$ for $m > 3/2$, so are $g_s$ and $g_c$ (where we use that, due to analyticity in $|k| < \delta_\Lambda/2$, $k \mapsto (P_c(k)-P_c(0))/k$ is bounded on $\R$).

Thus, on the one hand, the $ W_s $-equation in~\eqref{adamo} is linearly exponentially damped. On the other hand, the $ W_{c} $-equation in~\eqref{adamo}  is not linearly exponentially damped at $ k = 0 $, cf.~\eqref{webex4}, but has a `derivative' $ \vartheta $, with $ |\widehat{\vartheta}(k)| \leq C \min\{1,| k |\} $, in front of its nonlinearity $ g_{c} $.

\subsection{The equations for the error}

We write a solution $W$ to~\eqref{webex1} as sum of the transformed $n$-th order approximation
$ W_{\rm an}$, established in Corollary~\ref{Cor2.res}, and the transformed error $ \veps^\kappa R$, i.e.,
\begin{align}\label{deferror}
W =  W_{\rm an}+  \veps^\kappa R, \qquad W_{\rm an}(0) = W(0), \qquad \kappa > 1,
\end{align}
where we note that the scaling of the error with $\veps^\kappa$ is beneficial for the final energy estimates, cf.~\eqref{assump15.1} and~\eqref{assump17.1}. As in~\eqref{decomp}, we decompose the error as
\begin{align} \label{Rdecomp} R = R_s + R_c,\end{align}
where $R_s$ and $R_c$ are defined by their Fourier transforms:
$$\widehat{R}_s(k) = P_s(k)\widehat{R}(k), \qquad \widehat{R}_c(k) = P_c(k)\widehat{R}(k).$$
By~\eqref{adamo} the equations for $R_s$ and $R_c$ read
\begin{align} \label{webex2}
\partial_t  R_s & = \Lambda_s R_s +  h_s( W_{\rm an}, R)  + \veps^{-\kappa} {\textrm{Res}}_s(W_{\rm an}), \\
\partial_t  R_{c} & =  \Lambda_{c}  R_{c} +   \vartheta h_{c}( W_{\rm an}, R)  + \veps^{-\kappa} {\textrm{Res}}_{c}(W_{\rm an}),
\label{webex3}
\end{align}
where $ \mathop{\mathrm{Res}}_{s,c} ( W_{\rm an}) $ is the transformed residual and where we denote
$$
h_j( W_{\rm an}, R) =  \veps^{-\kappa} \left( g_j \left( W_{\rm an} + \veps^\kappa R\right)-g_j( W_{\rm an}) \right)
$$
for $ j = c,s $. We have the following tame estimate on $h_j(W_{\rm an},R)$.

\begin{lemma} \label{webex44}
Let $m > 3/2$ and $\sigma \geq 0$. For all $ M > 0$, there exists a constant $ C_1(W_{\rm an}) > 0 $, which depends on $\|W_{\rm an}\|_{\Chi^{m+1/2}_\sigma}$ only and satisfies $ C_1(W_{\rm an}) \to  0 $ as $ W_{\rm an} \to 0 $ in $\Chi^{m+1/2}_\sigma$, and there exists a constant $ C_2(M,W_{\rm an}) >0 $, which depends on $M$ and $\lVert W_{\rm an} \rVert_{\Chi^{m+1/2}_\sigma}$ only, such that for all
$ \varepsilon \in (0,1) $ and $R \in \Chi^{m+1/2}_\sigma $ satisfying  $  \| R \|_{ \Chi^{m}_\sigma}  \leq M $  we have the estimate
\begin{equation*}
\| h_j( W_{\rm an}, R) \|_{\Chi^{m-1/2}_\sigma} \leq C_1(W_{\rm an})  \| R \|_{ \Chi^{m+1/2}_\sigma}
+ C_2(M,W_{\rm an}) \veps^\kappa \| R \|_{ \Chi^{m+1/2}_\sigma},
\end{equation*}
for $j = s,c$.
\end{lemma}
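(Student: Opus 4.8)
\emph{Plan of proof.}
The idea is to split $h_j(W_{\rm an},R)$ into a part that is linear in $R$ and small because $g_j$ vanishes to second order at the origin, plus a remainder carrying the explicit factor $\veps^\kappa$. First I would record the structure of $g_j$. Since $G$ in~\eqref{webex1} arises from the full right--hand side of~\eqref{hambnew1} by subtracting the complete linearisation $\Lambda$, we have $G(0)=0$ and $DG(0)=0$, hence $g_j(0)=0$ and $Dg_j(0)=0$ for both $j=s,c$ (the operators $P_s$, $P_c(0)$ and $k\mapsto(P_c(k)-P_c(0))/k$ occurring in $g_j$ are bounded on every $G^s_\sigma$ and do not affect this). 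Moreover every nonlinear term of $g_j$ is built from pointwise products, Fourier multipliers of order at most one ($\mathcal{M}^{\pm1}$, $\mathcal{M}^{-1}\partial_x$, $\partial_x$, the spectral projectors, the bounded multiplier above), and compositions with entire functions of the component $r$ that vanish to second order. The only term carrying a genuine derivative is the $B$--component of $g_s$, namely $\partial_x\phi(r)$ with $\phi(r)=e^{2r}-1-2r=\mathcal{O}(r^2)$; for it I would write $\partial_x\phi(r)=\phi'(r)\,\partial_x r$ with $\phi'(r)=\mathcal{O}(r)$, so that the derivative is distributed and every factor keeps a small $r$ or $\partial_x r$.

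Next I would expand $g_j$ about $W_{\rm an}$ with integral remainder,
\[
h_j(W_{\rm an},R)=\int_0^1 Dg_j(W_{\rm an}+s\veps^\kappa R)[R]\,ds
= Dg_j(W_{\rm an})[R] + \veps^\kappa\!\int_0^1 (1-\tau)\, D^2g_j(W_{\rm an}+\tau\veps^\kappa R)[R,R]\,d\tau =: \mathrm{I}+\mathrm{II}.
\]
For the main term $\mathrm I$: since $g_j$ is at least quadratic, $Dg_j(W_{\rm an})$ carries at least one factor of $W_{\rm an}$, and its argument is $W_{\rm an}$ only (so the entire--function coefficients there depend only on $\|W_{\rm an}\|$). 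Using the algebra estimate~\eqref{1.alg}, the Kato--Ponce estimate~\eqref{1.kp} with low index in $(1/2,m-1/2)$ (admissible exactly because $m>3/2$), the Nemytskii estimate~\eqref{1.f} with the entire--function coefficients measured in a \emph{low} Gevrey norm so their arguments stay bounded by $\|W_{\rm an}\|_{G^{m+1/2}_\sigma}$, and the gain $\|\partial_x(\cdot)\|_{G^{m-1/2}_\sigma}\le C\|(\cdot)\|_{G^{m+1/2}_\sigma}$, I would distribute regularity so that $R$ sits in $G^{m+1/2}_\sigma$; after splitting off the constant value of any entire function not vanishing at $0$, this gives $\|\mathrm I\|_{G^{m-1/2}_\sigma}\le C_1(W_{\rm an})\|R\|_{G^{m+1/2}_\sigma}$ with $C_1$ a finite sum of terms each carrying a factor $\|W_{\rm an}\|_{G^{m+1/2}_\sigma}$, hence depending on $\|W_{\rm an}\|_{G^{m+1/2}_\sigma}$ only and tending to $0$ as $W_{\rm an}\to0$.

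For the remainder $\mathrm{II}$, which already carries $\veps^\kappa$, it suffices to estimate the bilinear form $D^2g_j(V)[R,R]$ in $G^{m-1/2}_\sigma$ with $V=W_{\rm an}+\tau\veps^\kappa R$, again via~\eqref{1.alg},~\eqref{1.kp},~\eqref{1.f}. The crucial bookkeeping is: put exactly one copy of $R$ in the top norm $G^{m+1/2}_\sigma$ and bound \emph{every} remaining copy of $R$ by $\|R\|_{G^m_\sigma}\le M$; measure each Nemytskii coefficient $E(V_r)$ in a low Gevrey norm, so that $\|E(V_r)\|$ is controlled by a monotone function of $\|V_r\|_{G^m_\sigma}\le\|W_{\rm an}\|_{G^{m+1/2}_\sigma}+M$; and for the $B$--component of $g_s$, once the distributed derivative is substituted via $\partial_x V_r=\partial_x W_{\rm an,r}+\tau\veps^\kappa\partial_x R_r$, the first summand yields a finite $\|W_{\rm an}\|_{G^{m+1/2}_\sigma}$--dependent constant while the second carries an extra $\veps^\kappa$ together with an extra differentiated copy of $R$ — harmless since $\veps\in(0,1)$ and the extra copy is again placed in $G^{m+1/2}_\sigma$ with the remaining ones bounded by $M$. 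This produces $\|D^2g_j(V)[R,R]\|_{G^{m-1/2}_\sigma}\le C_2(M,W_{\rm an})\|R\|_{G^{m+1/2}_\sigma}$, hence $\|\mathrm{II}\|_{G^{m-1/2}_\sigma}\le C_2(M,W_{\rm an})\veps^\kappa\|R\|_{G^{m+1/2}_\sigma}$. Adding the bounds for $\mathrm I$ and $\mathrm{II}$ gives the claim, uniformly in $j=s,c$.

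I expect the main obstacle to be precisely this tame bookkeeping: checking term by term that one factor of $R$ can always be placed in $G^{m+1/2}_\sigma$ while the rest are absorbed into $M$ and $\|W_{\rm an}\|_{G^{m+1/2}_\sigma}$; that each entire--function coefficient is evaluated in a norm low enough for its argument to remain bounded (so that no super--linear dependence on $\|R\|_{G^{m+1/2}_\sigma}$ appears); and that the single derivative in front of the $v$-- and $B$--nonlinearities is balanced against the gap between $G^{m+1/2}_\sigma$ and $G^{m-1/2}_\sigma$ while keeping the low Kato--Ponce index above $1/2$ — which is exactly where the hypothesis $m>3/2$ enters.
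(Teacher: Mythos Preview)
Your proposal is correct and follows essentially the same strategy as the paper: split $h_j$ into its linear-in-$R$ part (which carries a $W_{\rm an}$ factor because $g_j$ vanishes to second order) and a nonlinear-in-$R$ remainder (which carries the explicit $\veps^\kappa$), then estimate each via the algebra property~\eqref{1.alg}, the Nemytskii bound~\eqref{1.f}, and the Kato--Ponce inequality~\eqref{1.kp} with indices $m_1=m-1/2$, $m_2=m-1$ for the quadratic two-derivative terms $(\partial_x r)^2$, $(\mathcal M v)^2$, $(\mathcal M v)(\partial_x r)$---this last point being exactly where $m>3/2$ enters. The only cosmetic differences are that the paper identifies the linear/nonlinear split by inspection rather than via the integral Taylor formula, and your description ``the only term carrying a genuine derivative is the $B$-component'' slightly understates the number of two-derivative terms (they also sit in the $r$- and $v$-components), but your bookkeeping with~\eqref{1.kp} handles all of them the same way.
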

\begin{proof}
The nonlinear terms appearing on the right-hand side of~\eqref{hamb1}
are entire functions of  $ r,\psi,B $, and $  \partial_x r $
in the $ r $-equation, of $ r,\psi, \partial_x B, \partial_x r , \partial_x \psi  $, and $  \partial_x^2 r $
in the $ \psi $-equation, and of $ r $ and $  \partial_x r $ in the $ B $-equation.
After the transformation~\eqref{hamb0}, the nonlinear terms appearing on the right-hand side of~\eqref{hambnew1}
are entire functions of  $ r,\mathcal{M} v,B $, and $  \partial_x r $
in the $ r $-equation and of $ r $ and $  \partial_x r $ in the $ B $-equation.
The nonlinear terms appearing on the right-hand side of the $ v $-equation
consist of the Fourier multiplier operator $ \mathcal{M}^{-1} \partial_x$, which corresponds to multiplication with the bounded function $k \mapsto \mathrm{i}k/\sqrt{1+k^2}$, acting on entire functions of $r, \mathcal{M} v, B, \partial_x B$ and $\partial_x r$. Subsequently, these terms are mixed in the nonlinearities $g_s$ and $g_c$ in the equations~\eqref{adamo} for $W_s$ and $W_c$ by pointwise  multiplication in Fourier space with a bounded matrix function. Therefore, $g_s$ and $g_c$ consist of a bounded linear transformation acting on entire functions of $r,\partial_x r, \mathcal{M} v, B$ and $\partial_x B$. Hence, the same holds for $ h_s $ and $ h_c $ upon replacing $ W $ by $ W_{\rm an} + \varepsilon^{\kappa} R $.

Thus, by Corollary~\ref{Cor2.res}, by the fact that $G^{m-1/2}_\sigma$ is an algebra satisfying~\eqref{1.alg} and~\eqref{1.f}, and by the fact that the nonlinear terms in $G$ are (up to a bounded multiplier operator) entire functions of the variables $r, \mathcal{M} v, B, \partial_x B$ and $ \partial_x r$, it is readily seen that the terms in the nonlinearity $h_j(W_{\rm an},R)$ that are linear in $R$ (and thus are paired with a $W_{\rm an}$-contribution) are bounded by $C_1(W_{\rm an})  \| R \|_{ \Chi^{m+1/2}_\sigma}$.

Let us provide the details why the terms in the nonlinearity $h_j(W_{\rm an},R)$ that are nonlinear in $R$ can be bounded by $\smash{C_2(M,W_{\rm an}) \veps^\kappa  \| R \|_{ \Chi^{m+1/2}_\sigma}}$. First, most nonlinear terms in $G$ contain at most one derivative. Hence, using the properties~\eqref{1.alg} and~\eqref{1.f} again and the fact that the nonlinear terms in $G$ are (up to a bounded multiplier operator) entire functions of the variables $r, \mathcal{M} v, B, \partial_x B$ and $ \partial_x r$, the corresponding terms in $h_j(W_{\rm an},R)$ that are nonlinear in $R$ can be bounded by $C_2(M,W_{\rm an}) \veps^\kappa \| R \|_{ \Chi^{m+1/2}_\sigma}$ as long as $\| R \|_{ \Chi^{m}_\sigma} \leq M$. Second, nonlinear terms in $G$ that contain more than one derivative are all quadratic and (up to the bounded multiplier operator $\mathcal M^{-1}\partial_x$) of the form $(\partial_x r)^2$, $(\mathcal M v)^2$ or $(\mathcal M v) (\partial_x r)$. The corresponding terms in $h_j(W_{\rm an},R)$ that are nonlinear in $R$ can thus be bounded with the aid of~\eqref{1.kp}, where we take $m_1 = m-1/2$ and $m_2 = m-1$. This leads again to the bound $C_2(M,W_{\rm an}) \veps^\kappa  \| R \|_{ \Chi^{m+1/2}_\sigma}$ as long as $\| R \|_{ \Chi^{m}_\sigma} \leq M$.
\end{proof}

We emphasize that for our purposes the estimate in Lemma~\ref{webex44} on $ h_c $ is not sufficient, and we need to exploit the factor $\vartheta$ in front of $h_c$ in~\eqref{webex3}, see~Lemma~\ref{Lem3.1}.

\subsection{The energy inequalities}
\label{sec34new}

As outlined in~\S\ref{seclin31}, the fact that the linear part in the equations~\eqref{webex2}-\eqref{webex3} for the error $ R $ can exhibit $ \mathcal{O}(1)$ growth rates in time as $\veps \to 0$ is a serious
problem to obtain estimates on an $ \mathcal{O}(1/\veps) $-time scale. Moreover,
the smallness of the nonlinear terms in the derivation of the WMEs
comes from the derivatives, i.e., from a loss of regularity.
Both problems can be resolved by working in a time-dependent scale of Gevrey spaces as already used
in Section~\ref{sec2bks}.

In contrast to Section~\ref{sec2bks}, we work now in the unscaled $(x,t)$-variables. We take $0 < \tilde{\sigma}_1 < \sigma_1 < \sigma_0$. Then, Theorem~\ref{Th2.sol} yields local existence of the $n$-th order approximation in Gevrey spaces, i.e.,~there exists an $\veps$-independent $T_1 \in (0,\tilde{\sigma}_1/\eta)$ such that the transformed $n$-th order approximation $W_{\mathrm{an}}$ satisfies
\begin{align*}W_{\rm an} \in C^1\big((0,T_1/\veps],G^m_{\sigma_1/\varepsilon}\big) \cap C\big([0,T_1/\veps],G^{m+1}_{\sigma_1/\varepsilon}\big).\end{align*}
Setting $\sigma(t)=\tilde{\sigma}_1/\varepsilon-\eta t$, it follows in particular that
\begin{align}W_{\rm an} \in C^1\big((0,t],G^m_{\sigma(t)}\big) \cap C\big([0,t],G^{m+2}_{\sigma(t)}\big),\label{locexappr}\end{align}
for any $t \in (0,T_1/\veps]$. In addition, local existence of the exact solution $W$ to the semilinear parabolic system~\eqref{webex1} follows by a standard contraction mapping argument, see Remark~\ref{rem:local}. Since the error $\veps^\kappa R$ is defined in~\eqref{deferror} as the difference between the exact solution $W$ and the $n$-th order approximation $W_{\mathrm{an}}$, this yields
\begin{align}R \in C^1\big((0,t],G^m_{\sigma(t)}\big) \cap C\big([0,t],G^{m+2}_{\sigma(t)}\big),  \label{regR}
\end{align}
for $t \in (0,T_1/\veps]$ as long as $\|R(t)\|_{G^m_{\sigma(t)}}$ remains bounded.

Let $M > 0$ be a (sufficiently large) $t$-independent constant, which we will fix a posteriori. Take $t \in (0,T_1/\veps]$ such that
\begin{align} {\sup_{\tau \in [0,t]} \|R(\tau)\|_{G^m_{\sigma(\tau)}}} \leq M. \label{bounded}\end{align}
As in~\S\ref{sec-CK-gevrey} and~\S\ref{sec-approx}, let $|k|_{op}=\sqrt{-\partial_x^2}$, and multiply equations~\eqref{webex2} and~\eqref{webex3} by $e^{2\sigma(t)|k|_{op}}(1+|k|_{op}^{2})^mR_s$ and $e^{2\sigma(t)|k|_{op}}(1+|k|_{op}^{2})^rR_c$, respectively. Integration over $x\in\R$ leads to
\begin{align}\label{star4}
 \begin{split}
  \frac12\frac d{dt}\|R_s\|^2_{G^m_{\sigma(t)}} &\!= -\eta\||k|_{op}^{1/2} R_s\|^2_{G^m_{\sigma(t)}}
 + \mathrm{Re}
( \Lambda_s R_s +  h_s( W_{\rm an}, R)  + \veps^{-\kappa} {\textrm{Res}}_s(W_{\rm an}),R_s)_{G^m_{\sigma(t)}},
\end{split}\\
\begin{split}
 \frac12\frac d{dt}\|R_c\|^2_{G^r_{\sigma(t)}}
&\!= - \eta\||k|_{op}^{1/2} R_c\|^2_{G^r_{\sigma(t)}} + \mathrm{Re}
( \Lambda_c R_c + \vartheta h_c( W_{\rm an}, R) + \veps^{-\kappa} {\textrm{Res}}_c(W_{\rm an}), R_c)_{G^r_{\sigma(t)}}.\end{split} \label{star6}
\end{align}
Here, we emphasize that we can estimate the $G^{m}_{\sigma(t)}$-norm of $R_c$ by its $G^r_{\sigma(t)} $-norm for any $r \geq 0$, because $R_c$ has compact support in Fourier space by~\eqref{specdecomp} and~\eqref{Rdecomp}.

Thus, the linear terms in~\eqref{webex2} and~\eqref{webex3} yield the contributions
$$-\eta\||k|_{op}^{1/2} R_s\|^2_{G^m_{\sigma(t)}} + \mathrm{Re}(\Lambda_s R_s,R_s)_{G^m_{\sigma(t)}},  \qquad -\eta\||k|_{op}^{1/2} R_c\|^2_{G^m_{\sigma(t)}} + \mathrm{Re}(\Lambda_c R_c,R_c)_{G^r_{\sigma(t)}}$$
in~\eqref{star4} and~\eqref{star6}, respectively, which provide damping in $R_s$ and $R_c$, see estimate~\eqref{estF1} below. However, this damping is insufficient to control all $R_c$-contributions in $\smash{(h_s(W_{\rm an}, R),R_s)_{G^m_{\sigma(t)}}}$, see the upcoming estimate~\eqref{estF1} and Lemma~\ref{webex44}. We emphasize that such terms do not arise in $\smash{(\vartheta h_c( W_{\rm an}, R), R_c)_{G^r_{\sigma(t)}}}$, since $\widehat{\vartheta}(k)$ vanishes at $k = 0$, see Lemma~\ref{Lem3.1} below. For the same reason such terms do also not appear in $\smash{(|k|^{1/2}_{op} h_s( W_{\rm an}, R),|k|^{1/2}_{op}R_s)}$. Therefore, we set $r = m-1/2$, and we do not only consider the energies $\smash{\lVert R_s \rVert^2_{G^m_{\sigma(t)}}}$ and $\smash{\lVert R_c\rVert^2_{G^r_{\sigma(t)}}}$, but also $\smash{\lVert |k|^{1/2}_{op}R_s\rVert^2_{G^r_{\sigma(t)}}}$.

Thus, multiplying~\eqref{webex2} by $e^{2\sigma(t)|k|_{op}}(1+|k|_{op}^{2})^r |k|_{op}R_s$ yields
\begin{align}
\begin{split}
  \frac12\frac d{dt}\||k|_{op}^{1/2} R_s\|^2_{G^r_{\sigma(t)}}
& = - \eta\||k|_{op} R_s\|^2_{G^r_{\sigma(t)}} + \mathrm{Re}
( \Lambda_s |k|_{op}^{1/2} R_s,|k|^{1/2}_{op} R_s)_{G^r_{\sigma(t)}}\\
& \qquad + \mathrm{Re}(|k|_{op}^{1/2} h_s( W_{\rm an}, R) + |k|_{op}^{1/2}\veps^{-\kappa} {\textrm{Res}}_s(W_{\rm an}), |k|_{op}^{1/2} R_s)_{G^r_{\sigma(t)}}.\end{split} \label{star5}
\end{align}
We readily establish bounds on most terms on the right-hand side of~\eqref{star4},~\eqref{star6} and~\eqref{star5}.
\begin{enumerate}
\item Using~\eqref{webex4} we bound the linear terms as
\begin{align}
 \begin{split} \label{estF1}
\mathrm{Re}( \Lambda_s R_s ,R_s)_{G^m_{\sigma(t)}}  & \leq - d_{\Lambda,0}
( R_s , R_s)_{G^m_{\sigma(t)}}
+ d_{\Lambda,1} ( |k|_{op}^{1/2} R_s ,|k|_{op}^{1/2} R_s)_{G^{m}_{\sigma(t)}}\\
\mathrm{Re}( \Lambda_s |k|_{op}^{1/2} R_s , |k|_{op}^{1/2} R_s)_{G^r_{\sigma(t)}}  & \leq -d_{\Lambda,0}( |k|_{op}^{1/2} R_s ,|k|_{op}^{1/2} R_s)_{G^{r}_{\sigma(t)}}\\ &\qquad +  d_{\Lambda,1} ( |k|_{op} R_s ,|k|_{op} R_s)_{G^{r}_{\sigma(t)}},\\
\mathrm{Re}( \Lambda_{c}  R_{c} ,R_c)_{G^r_{\sigma(t)}}  & \leq  d_{\Lambda,1} ( |k|_{op}^{1/2}R_c ,|k|_{op}^{1/2} R_c)_{G^{r}_{\sigma(t)}},
\end{split}
\end{align}
with strictly positive constants $  d_{\Lambda,0} $ and $  d_{\Lambda,1} $.
\item
Using~\eqref{CauchySchwarz},~\eqref{bounded} and Lemma~\ref{webex44} we get
\begin{align}
  \begin{split} \label{estF2}
|(  h_s( W_{\rm an}, R)  ,R_s)_{G^m_{\sigma(t)}} |& \leq  \| h_s( W_{\rm an}, R)  \|_{ \Chi^{m-1/2}_{\sigma(t)}}  \| R \|_{ \Chi^{m+1/2}_{\sigma(t)}}
 \\ &\leq  C_1(W_{\rm an})  \| R \|_{ \Chi^{m+1/2}_{\sigma(t)}}^2
+ C_2(M,W_{\rm an}) \veps^\kappa  \| R \|_{ \Chi^{m+1/2}_{\sigma(t)}}^2.
\end{split}
\end{align}
\item
Using Corollary~\ref{Cor2.res} we find
\begin{align}
  \begin{split} \label{estF3}
|( \veps^{-\kappa} {\textrm{Res}}_s(W_{\rm an}),R_s)_{G^m_{\sigma(t)}} |
& \leq \| \veps^{-\kappa} {\textrm{Res}}_s(W_{\rm an}) \|_{ \Chi^{m}_{\sigma(t)}} \| R \|_{ \Chi^{m}_{\sigma(t)}} \\
&\leq \veps^{n+1-\kappa}C_{res} \| R \|_{ \Chi^{m}_{\sigma(t)}}
\leq \veps^{n+1-\kappa}C_{res} (1+\| R \|_{ \Chi^{m}_{\sigma(t)}}^2),
\end{split}
\end{align}
Similarly, using $r = m-1/2$, we obtain
\begin{align}
  \begin{split} \label{estF4}
|(|k|^{1/2}_{op} \veps^{-\kappa} {\textrm{Res}}_s(W_{\rm an}),|k|^{1/2}_{op} R_s)_{G^r_{\sigma(t)}} |
& \leq \| \veps^{-\kappa} {\textrm{Res}}_s(W_{\rm an}) \|_{ \Chi^{m}_{\sigma(t)}} \| |k|^{1/2}_{op} R \|_{ \Chi^{r}_{\sigma(t)}}\\ &\leq \veps^{n+1-\kappa}C_{res} (1+\| |k|^{1/2}_{op} R_s \|_{ \Chi^{r}_{\sigma(t)}}^2),\\
|(\veps^{-\kappa} {\textrm{Res}}_c(W_{\rm an}),R_c)_{G^r_{\sigma(t)}} | & \leq \| \veps^{-\kappa} {\textrm{Res}}_c(W_{\rm an}) \|_{ \Chi^{m}_{\sigma(t)}} \| R_c \|_{ \Chi^{r}_{\sigma(t)}}\\ &\leq \veps^{n+1-\kappa}C_{res} (1+\| R_c \|_{ \Chi^{r}_{\sigma(t)}}^2).
\end{split}
\end{align}
\end{enumerate}
Obtaining useful estimates on the remaining terms
\begin{align*}
(    \vartheta h_{c}( W_{\rm an}, R) ,R_c)_{G^r_{\sigma(t)}}, \qquad (|k|_{op}^{1/2} h_{s}( W_{\rm an}, R) , |k|^{1/2}_{op} R_s)_{G^r_{\sigma(t)}},
\end{align*}
is less trivial. We illustrate how to estimate these terms for the prototypical example $ (R_1 \partial_x( W_{\rm an} R_2))_{L^2} $, which we bound in
Fourier space by
\begin{align*}
| ( R_1, \partial_x (W_{\rm an} R_2)_{L^2} |  &\leq \lVert |k|^{1/2} \widehat R_1 \rVert_{L^2}\lVert |k|^{1/2} (\widehat W_{\rm an} \ast \widehat R_2) \rVert_{L^2}  \\
& \leq \lVert |k|^{1/2} \widehat R_1 \rVert_{L^2} \left( \lVert |k|^{1/2} \widehat W_{\rm an} \ast \widehat R_2 \rVert_{L^2} +  \lVert \widehat W_{\rm an} \ast|k|^{1/2} \widehat R_2 \rVert_{L^2}\right) \\
& \leq \lVert |k|^{1/2} \widehat R_1 \rVert_{L^2} \left( \lVert |k|^{1/2} \widehat{W}_{\rm an} \rVert_{L^1} \lVert \widehat R_2 \rVert_{L^2} +  \lVert \widehat{W}_{\rm an} \rVert_{L^1} \rVert |k|^{1/2} \widehat R_2 \rVert_{L^2}\right) \\
&\leq \frac{1}{2}\lVert |k|^{1/2} \widehat R_1 \rVert_{L^2}^2 + \frac{1}{2} \left( \lVert |k|^{1/2} \widehat{W}_{\rm an} \rVert_{L^1} \lVert \widehat R_2 \rVert_{L^2}\right)^2 \\\ &\quad +  \lVert |k|^{1/2} \widehat R_1 \rVert_{L^2} \lVert \widehat W_{\rm an} \rVert_{L^1} \rVert |k|^{1/2} \widehat R_2 \rVert_{L^2},
\end{align*}
where we used $ \sqrt{|k|} \leq \sqrt{|k-l|} + \sqrt{|l|} $.
Note that $  \|  |k|^{1/2}  \widehat{W}_{\rm an} \|_{L^1}  = \mathcal{O}(\varepsilon^{1/2}) $ due to the long-wave character of $ W_{\rm an} $ in physical space. With this idea in mind, we prove the following result.
\begin{lemma} \label{Lem3.1}
Let $r>1$ and $\sigma \geq 0$. For all $ M > 0$, there exists a constant $ C_3(W_{\rm an}) > 0 $, which depends on $\|W_{\rm an}\|_{\Chi^{r+1}_\sigma}$ only and satisfies $ C_3(W_{\rm an}) \to  0 $ as $ W_{\rm an} \to 0 $ in $\Chi^{r+1}_\sigma$, and there exists a constant $ C_4(M,W_{\rm an}) >0 $, which depends on $M$ and $\lVert W_{\rm an} \rVert_{\Chi^{r+1}_\sigma}$ only, such that for all
$ \varepsilon \in (0,1) $ and $R \in \Chi^{r+1}_\sigma$ with $ \| R \|_{ \Chi^{r+1/2}_\sigma}  \leq M $  we have the estimate
\begin{align*}
&\left|(    \vartheta h_{c}( W_{\rm an}, R) ,R_c)_{G^r_{\sigma}} \right| + \left|( |k|_{op}^{1/2} h_{s}( W_{\rm an}, R) , |k|_{op}^{1/2} R_s)_{G^r_{\sigma}}\right| \\
&\qquad \leq
C_3(W_{\rm an})\left(\||k|_{op}^{1/2} R\|^2_{G^{r+1/2}_{\sigma}} + \varepsilon  \|R\|^2_{G^{r+1}_{\sigma}}\right) + C_4(M,W_{\rm an}) \varepsilon^{\kappa} \|R\|^2_{G^{r+1}_{\sigma}} ,
\end{align*}
\end{lemma}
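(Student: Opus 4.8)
The plan is to bound each of the two inner products by expanding the nonlinearities $h_s(W_{\rm an},R)$ and $h_c(W_{\rm an},R)$ into their constituent terms and treating each according to its structure, always exploiting the ``derivative in front'' — the operator $\vartheta$ for $h_c$ and the factor $|k|_{op}^{1/2}$ for the $R_s$-term — together with the long-wave character of $W_{\rm an}$, which gives $\||k|^{1/2}\widehat{W}_{\rm an}\|_{L^1} = \mathcal{O}(\varepsilon^{1/2})$ and $\|\widehat{W}_{\rm an}\|_{L^1} = \mathcal{O}(1)$ (with the analogous bounds for the Gevrey-weighted transforms). First I would recall from the proof of Lemma~\ref{webex44} that, up to bounded Fourier multipliers, $h_j(W_{\rm an},R)$ is built from entire functions of $r,\partial_x r, \mathcal{M}v, B, \partial_x B$, evaluated on $W_{\rm an}+\varepsilon^\kappa R$ minus the same on $W_{\rm an}$, divided by $\varepsilon^\kappa$. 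Splitting via a Taylor expansion, the terms linear in $R$ are each paired with at least one $W_{\rm an}$-factor, and the terms that are genuinely nonlinear in $R$ carry a surplus power $\varepsilon^\kappa$ (as long as $\|R\|_{G^r_\sigma}\le M$, using that the relevant entire-function estimates~\eqref{1.f} are locally uniform). These nonlinear-in-$R$ contributions are absorbed into the $C_4(M,W_{\rm an})\varepsilon^\kappa\|R\|^2_{G^{r+1}_\sigma}$ term by exactly the argument of Lemma~\ref{webex44} (distributing the extra $|k|^{1/2}$ or $\vartheta$ using $\sqrt{|k|}\le\sqrt{|k-l|}+\sqrt{|l|}$ and the algebra/Kato–Ponce estimates~\eqref{1.alg},~\eqref{1.kp}), so the real work is in the terms linear in $R$.

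For a prototypical linear-in-$R$ term of the schematic form $(|k|_{op}^{1/2} \mathcal{B}(W_{\rm an} R), |k|_{op}^{1/2}R_s)_{G^r_\sigma}$ with $\mathcal{B}$ a bounded multiplier and $W_{\rm an}$-dependence possibly through $\partial_x W_{\rm an}$ or $\mathcal{M}W_{\rm an}$ (still yielding $L^1$-bounds on $|k|^{1/2}\widehat{W}_{\rm an}$ of order $\varepsilon^{1/2}$ on the relevant frequency support), I would mimic the displayed model computation: pass to Fourier space, move the weight $e^{\sigma|k|}(1+|k|^2)^{r/2}$ onto the convolution using $e^{\sigma|k|}\le e^{\sigma|k-l|}e^{\sigma|l|}$ and the Moser-type inequality for $(1+|k|^2)^{r/2}$, distribute the $|k|^{1/2}$ across the convolution with $\sqrt{|k|}\le\sqrt{|k-l|}+\sqrt{|l|}$, apply Young's convolution inequality to pull out the $L^1$-norm of (a weighted) $\widehat{W}_{\rm an}$, and finish with Cauchy–Schwarz/Young in the two remaining $L^2$-factors. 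When the $|k|^{1/2}$ lands on $\widehat{W}_{\rm an}$ one gets the prefactor $\mathcal{O}(\varepsilon^{1/2})$, which combined with a $\|R\|_{G^{r+1}_\sigma}$-factor produces a contribution $\le C_3(W_{\rm an})\varepsilon\|R\|^2_{G^{r+1}_\sigma}$; when it lands on $\widehat{R}$ one gets $C_3(W_{\rm an})\||k|_{op}^{1/2}R\|^2_{G^{r+1/2}_\sigma}$ with $C_3(W_{\rm an})$ controlled by $\|W_{\rm an}\|_{G^{r+1}_\sigma}$ (the extra unit of regularity on $W_{\rm an}$ absorbs the $\partial_x$ hidden in the worst terms) and vanishing as $W_{\rm an}\to 0$. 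The $\vartheta h_c$-term is handled identically, with $\widehat{\vartheta}(k)=\chi_\Lambda(k)k$ playing the role of the outer $|k|_{op}$: since $|\widehat{\vartheta}(k)|\le C\min\{1,|k|\}$ one again writes $|k|\le|k-l|+|l|$ (or extracts a $|k|^{1/2}$ twice) so that at least a full derivative or a half-derivative always lands favorably, and crucially the $k=0$ contribution is killed so no undamped low-frequency residue survives. One must also account for the fact that $\breve{g}_c$ contains the bounded-but-not-small difference quotient $(P_c(k)-P_c(0))/k$; this is handled by noting it is a bounded multiplier, so it only affects constants, and the remaining factor of $\vartheta$ still furnishes the needed derivative.

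The main obstacle I anticipate is the careful bookkeeping of \emph{where} derivatives sit: the $v$-equation in~\eqref{hambnew1} contains terms of the form $\mathcal{M}^{-1}\partial_x^3 r$, $\mathcal{M}^{-1}\partial_x((\partial_x r)^2)$, $\mathcal{M}^{-1}\partial_x((\mathcal Mv)(\partial_x r))$, etc., so after projecting with $P_s$ or $P_c\,\vartheta$ and pairing against $|k|_{op}^{1/2}R_s$ or $R_c$, one must verify that in every single monomial the total number of derivatives can be distributed so that $R$ is hit by at most $r+1$ derivatives (in the linear-in-$R$ terms) or at most $r+1/2$ (after the favorable half-derivative transfer), while $W_{\rm an}$ absorbs the remainder within its $G^{r+1}_\sigma$-budget — and simultaneously that the $\varepsilon^{1/2}$ gain is not lost. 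Getting the exponents to line up so that the final bound is exactly $C_3(\||k|_{op}^{1/2}R\|^2_{G^{r+1/2}_\sigma}+\varepsilon\|R\|^2_{G^{r+1}_\sigma}) + C_4\varepsilon^\kappa\|R\|^2_{G^{r+1}_\sigma}$, rather than something one power of $\varepsilon$ weaker or one derivative greedier, is the delicate part; the rest is a finite enumeration of terms, each dispatched by the model computation above together with~\eqref{1.alg},~\eqref{1.kp} and the long-wave scaling of $W_{\rm an}$.
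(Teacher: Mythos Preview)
Your proposal is correct and follows essentially the same route as the paper: split $h_j$ into linear-in-$R$ and nonlinear-in-$R$ parts, dispose of the nonlinear part via the argument of Lemma~\ref{webex44}, and for the linear part pass to Fourier space, distribute the outer (semi)derivative across the convolution using $\sqrt{|k|}\le\sqrt{|k-l|}+\sqrt{|l|}$, and exploit the long-wave scaling $\||k|^{i}\widehat{H(W_{\rm an})}\|_{L^1}=\mathcal{O}(\varepsilon^{i})$. One small point of bookkeeping: the $\varepsilon\|R\|^2_{G^{r+1}_\sigma}$ contribution in the paper does not arise from squaring an $\varepsilon^{1/2}$ coming off a half-derivative on $W_{\rm an}$, but rather from the terms in the expansion $|k|(1+|l|)(1+|k-l|)\le \sqrt{|k|}(\sqrt{|k-l|}+\sqrt{|l|})+|k|(|k-l|+|l|+|l||k-l|)$ where a \emph{full} factor $|k-l|$ lands on $\widehat{H(W_{\rm an})}$, producing $\mathcal{O}(\varepsilon)$ directly; the half-derivative terms feed only the $\||k|_{op}^{1/2}R\|^2_{G^{r+1/2}_\sigma}$ part.
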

\noindent{\bf Proof.}
Recall from the proof of Lemma~\ref{webex44} that $g_s$ and $g_c$ consist of bounded linear transformations acting on entire functions of $r, \mathcal{M} v, B, \partial_x B$ and $\partial_x r$. Hence, the same holds for $ h_s $ and $ h_c $ upon replacing $ W $ by $ W_{\rm an} + \varepsilon^{\kappa} R $.
Therefore, we have a representation
$$
h_j( W_{\rm an}, R) = h_{j,lin}( W_{\rm an}, R) + h_{j,non}( W_{\rm an}, R), \qquad j = s,c,
$$
with $ h_{j,lin} $ linear in $ R $ and $ h_{j,non} $ nonlinear in $R$.

As in the proof of Lemma~\ref{webex44}, we arrive at the estimate
$$
\| h_{j,non}( W_{\rm an}, R)  \|_{G^r_{\sigma}}  \leq
 C_4(M,W_{\rm an}) \varepsilon^{\kappa} \|R\|_{G^{r+1}_{\sigma}}, \qquad j = s,c,
$$
on the nonlinear terms in $R$. From the last estimate and
\begin{align*}
|(    \vartheta h_{c,non}( W_{\rm an}, R) ,R_c)_{G^r_{\sigma}}| &\leq \|h_{c}( W_{\rm an}, R)  \|_{G^{r}_{\sigma}} \| R_c \|_{G^{r}_{\sigma}},\\
|(|k|_{op}^{1/2} h_{s,non}( W_{\rm an}, R) ,|k|_{op}^{1/2} R_s)_{G^r_{\sigma}}| &\leq \|h_s( W_{\rm an}, R)  \|_{G^{r}_{\sigma}} \|R_s \|_{G^{r+1}_{\sigma}},
\end{align*}
we immediately obtain the required estimate for the $h_{j,non} $-contribution.

By the considerations in the proof of Lemma~\ref{webex44}, the  parts
$ (  \vartheta h_{c,lin}( W_{\rm an}, R) ,R_c)_{G^r_{\sigma}} $ and $ (|k|^{1/2}_{op} h_{s,lin}( W_{\rm an},  R) ,|k|^{1/2}_{op}R_s)_{G^r_{\sigma}} $ are finite sums of terms of the form
$$
\int_\R\int_\R \overline{\widehat{R}}_{j_1}(k)  s_0(k) s_1(k-l) \widehat{H(W_{\rm an})}(k-l)  s_2(l)  \widehat{R}_{j_2}(l) dl (1+|k|^{2})^r e^{2 \sigma |k|} dk = (*) ,
$$
with $R_{j_1}$ a component of the vector $R_s$ or $R_c$, and $R_{j_2}$ a component of the vector $R$, where $H(W_{\rm an})$ is a bounded linear transformation applied to an entire function of $ W_{\rm an} $,
and where $s_0,s_i \colon \R \to \R$ are functions satisfying $|s_0(k)| \leq C|k|$ and $|s_i(k)| \leq C (1+|k|) $ for $k \in \R$ and $i = 1,2$.
We proceed as in the prototypical example above. First, we observe that
\begin{align*}|s_0(k)&s_1(k-l)s_2(l)| \leq
C|k|\left(1 + |l|\right)(1 + |k-l|)\\
&\leq C\sqrt{|k|}\left(\sqrt{|k-l|} + \sqrt{|l|}\right) + C|k|\left(|k-l| + |l| + |l||k-l|\right), \end{align*}
for $k,l \in \R$. Thus, we estimate $(*)$ by
\begin{align*}
&\int_\R\int_\R |s_0(k)s_1(k-l)s_2(l)| |{\widehat{R}}_{j_1}(k)|  | |\widehat{H(W_{\rm an})}(k-l)| |\widehat{R}_{j_2}(l) | dl (1+|k|^{2})^r e^{2 \sigma |k|} dk
\\
&\qquad \leq C_3(W_{\rm an})\left(\||k|_{op}^{1/2} R\|^2_{G^{r+1/2}_{\sigma}} +  \varepsilon  \|R\|^2_{G^{r+1}_{\sigma}}\right)
\end{align*}
where we used that $  \| |k|^i  \widehat{H(W_{\rm an})}(k) \|_{L^1(dk)} = \mathcal{O}(\varepsilon^{i}) $ for $i = 0,1/2,1$ due to the long-wave character of $ W_{\rm an} $ in physical space. So the required estimate for the $h_{j,lin} $-contribution follows, too.
\qed

\subsection{The final estimates}

We define the energies
$$ E_{j,i}(t) = \|R_j(t)\|^2_{G^i_{\sigma(t)}},$$
for $j = s,c, \ i = r,m$. Moreover, we introduce for $j = s,c, \ i = r,m$
$$ |k|_{op}^{1/2}E_{j,i}(t) = \||k|_{op}^{1/2} R_j(t)\|^2_{G^i_{\sigma(t)}},\qquad |k|_{op}E_{j,i}(t) = \||k|_{op} R_j(t)\|^2_{G^i_{\sigma(t)}}.$$
In the upcoming part in our notation we suppress the $t$-dependency of these quantities.

Since $ R_c $ has compact support in Fourier space by~\eqref{specdecomp} and~\eqref{Rdecomp}, we can estimate its $G^{m+1/2}_{\sigma(t)} $-norm by its $G^r_{\sigma(t)} $-norm, where we recall $r = m-1/2 > 1$. Thus, there exists a $t$-independent constant $ C_{cp} > 0 $ such that
\begin{align}E_{c,m+1/2} \leq C_{cp}E_{c,r}, \qquad |k|_{op}^{1/2}E_{c,r+1/2} \leq C_{cp}|k|_{op}^{1/2}E_{c,r}. \label{loccenter}\end{align}
Finally, we set
$$ E_m = E_{s,m}+ E_{c,m}, \qquad \E_r = |k|_{op}^{1/2} E_{s,r} + E_{c,r}.$$

Taking $n \in \mathbb N$ with $n \geq \kappa + 1$ and $M > 0$, the previous estimates~\eqref{estF1},~\eqref{estF2}, \eqref{estF3} and~\eqref{estF4}, and Lemma~\ref{Lem3.1} from Section~\ref{sec34new} condense in the three inequalities
\begin{align}
\begin{split}
\frac12 \frac{d}{dt} E_{s,m} & \leq -  \eta |k|_{op}^{1/2} E_{s,m}  - d_{\Lambda,0} E_{s,m} +
d_{\Lambda,1} |k|_{op}^{1/2}E_{s,m} + C_{res} \varepsilon (E_{s,m} +1)
\\&\quad +C_1(W_{\rm an}) E_{m+1/2}
+ C_2(M,W_{\rm an}) \varepsilon^{\kappa} E_{m+1/2}, \label{3ineq1}
\end{split}\\
\begin{split}
\frac12 \frac{d}{dt} \left[|k|_{op}^{1/2} E_{s,r}\right] & \leq -\eta |k|_{op} E_{s,r} - d_{\Lambda,0} |k|^{1/2}_{op} E_{s,r} + d_{\Lambda,1} |k|_{op}E_{s,r} + C_{res} \varepsilon (|k|_{op}^{1/2}E_{s,r} +1)
\\&\quad + C_3(W_{\rm an})\left(|k|_{op}^{1/2}E_{r+1/2} + \varepsilon E_{r+1}\right)
+ C_4(M,W_{\rm an}) \varepsilon^{\kappa} E_{r+1},\label{3ineq2}
\end{split}\\
\begin{split}
\frac12 \frac{d}{dt} E_{c,r}  & \leq   - \eta |k|_{op}^{1/2} E_{c,r}  +
d_{\Lambda,1} |k|_{op}^{1/2}E_{c,r} +  C_{res} \varepsilon (E_{c,r} +1)
\\&\quad + C_3(W_{\rm an})\left(|k|_{op}^{1/2}E_{r+1/2} + \varepsilon E_{r+1}\right)
+ C_4(M,W_{\rm an}) \varepsilon^{\kappa} E_{r+1},\label{3ineq3}
\end{split}
\end{align}

We use $E_{m+1/2} \leq E_{s,m} + |k|_{op}^{1/2} E_{s,m} + E_{c,m+1/2}$ and~\eqref{loccenter} to rewrite~\eqref{3ineq1} as
\begin{align*}
\frac12 \frac{d}{dt} E_{s,m} & \leq - \eta |k|_{op}^{1/2} E_{s,m}  - d_{\Lambda,0} E_{s,m} +
d_{\Lambda,1} |k|_{op}^{1/2}E_{s,m} + C_{res} \varepsilon
\\ &\qquad
+ \left(C_1(W_{\rm an}) + C_2(M,W_{\rm an}) \varepsilon^{\kappa} + C_{res} \varepsilon\right)\left(E_{s,m} + |k|_{op}^{1/2} E_{s,m} + C_{cp} E_{c,r}\right).
\end{align*}
Lemma~\ref{webex44} implies that there exists an $M$- and $t$-independent bound $B_1 > 0$ such that, if we have $\|W_{\mathrm{an}}\|_{G^{m+1/2}_{\tilde{\sigma}_1}} < B_1$ and $\varepsilon \in (0,d_{\Lambda,0}/4(1+C_{res}))$, then it holds
\begin{align} \label{assump15.2}
C_1(W_{\rm an}) + \varepsilon + C_{res}\varepsilon < \frac{d_{\Lambda,0}}{2}.
\end{align}
We note that, by Corollary~\ref{Cor2.res}, $\smash{\|W_{\mathrm{an}}\|_{G^{m+1/2}_{\tilde{\sigma}_1}}} < B_1$ can be achieved by  taking $C_{wh} > 0$ sufficiently small in~\eqref{whismal}.
Then, there exists a bound $\epsilon_1 = \smash{\epsilon_1(\kappa,M,\|W_{\rm an}\|_{G_{\tilde{\sigma}_1}^{m+1/2}}) > 0}$, which depends on $\kappa > 1$, $M$ and $\smash{\|W_{\rm an}\|_{G_{\tilde{\sigma}_1}^{m+1/2}}}$ only, such that for $\epsilon \in (0,\epsilon_1)$ we have
\begin{align} \label{assump15.1}
C_2(M,W_{\rm an})\varepsilon^{\kappa - 1} < 1.
\end{align}
where we use $\kappa > 1$. Thus, assuming~\eqref{assump15.2}-\eqref{assump15.1} are satisfied and taking
\begin{align*}
\eta > \frac{d_{\Lambda,0}}{2} + d_{\Lambda,1},
\end{align*}
yields
\begin{align}
\label{Esmineq}
\begin{split}
\frac12 \frac{d}{dt} E_{s,m} &\leq -\frac{d_{\Lambda,0}}{2} E_{s,m} + C_{res} \varepsilon
 + \frac{d_{\Lambda,0}}{2} C_{cp} E_{r,c} \\
 &\leq -\frac{d_{\Lambda,0}}{2} E_{s,m} + C_{res} \varepsilon
 + \frac{d_{\Lambda,0}}{2} C_{cp} \E_{r}.
 \end{split}
\end{align}

Subsequently, we combine \eqref{3ineq2}-\eqref{3ineq3} and use the identities $|k|_{op}^{1/2} E_{r+1/2} \leq |k|_{op}^{1/2} E_{s,r} + |k|_{op} E_{s,r} + |k|^{1/2}_{op} E_{c,r+1/2}$ and $\smash{E_{r+1} \leq E_r + |k|_{op}^{1/2} E_{r+1/2}}$, in combination with~\eqref{loccenter} to arrive at
\begin{align*}
\frac12 \frac{d}{dt} \E_{r}  & \leq  -\eta |k|_{op}^{1/2} E_{c,r}  +
d_{\Lambda,1} |k|_{op}^{1/2}E_{c,r} -  \eta |k|_{op} E_{s,r} - d_{\Lambda,0} |k|^{1/2}_{op} E_{s,r} +
d_{\Lambda,1} |k|_{op}E_{s,r} \\
& + 2\left(C_3(W_{\rm an})(1 + \varepsilon) + C_4(M,W_{\mathrm{an}})\varepsilon^\kappa \right)\left(C_{cp}|k|_{op}^{1/2}E_{c,r} + |k|_{op}^{1/2}E_{s,r} + |k|_{op}E_{s,r}\right)
 \\& + 2\left(C_3(W_{\rm an}) \varepsilon + C_4(M,W_{\mathrm{an}})\varepsilon^\kappa \right) E_r + 2C_{res} \varepsilon + C_{res}\varepsilon\E_{r}.
\end{align*}
By Lemma~\ref{Lem3.1} there exists an $M$- and $t$-independent bound $B_2 > 0$ such that, if we have $\|W_{\mathrm{an}}\|_{G^{r+1}_{\tilde{\sigma}_1}} < B_2$ and $\varepsilon \in (0,\frac{d_{\Lambda,0}}{4})$, then it holds
\begin{align} \label{assump17.2}
C_3(W_{\mathrm{an}})(1+\varepsilon) + \varepsilon < \frac{d_{\Lambda,0}}{2}.
\end{align}
As before, this can be achieved by choosing $C_{wh}$ sufficiently small.
Moreover, Lemma~\ref{Lem3.1} implies that there exists a bound $\smash{\epsilon_2 = \epsilon_2(\kappa,M,\|W_{\mathrm{an}}\|_{G^{r+1}_{\tilde{\sigma}_1}}) > 0}$, which depends on $\kappa > 1$, $M$ and $\smash{\|W_{\rm an}\|_{G_{\tilde{\sigma}_1}^{r+1}}}$ only, such that for $\epsilon \in (0,\epsilon_2)$ we have
\begin{equation} \label{assump17.1}
C_4(M,W_{\mathrm{an}}) \varepsilon^{\kappa-1} < 1.
\end{equation}
where we recall $r = m-1/2$.
Thus, assuming~\eqref{assump17.2}-\eqref{assump17.1} and taking $\eta > 0$ so large that
\begin{equation*}
\eta > d_{\Lambda,1}  + (1 + C_{cp})d_{\Lambda,0},
\end{equation*}
yields
\begin{align}
\label{Erineq}
\begin{split}
\frac12 \frac{d}{dt} \E_{r} &\leq 2\left(C_3(W_{\rm an}) + C_4(M,W_{\mathrm{an}})\varepsilon^\kappa \right) \varepsilon  E_r +  2C_{res} \varepsilon + C_{res}\varepsilon\E_{r} \\
&\leq \left(d_{\Lambda,0} + C_{res}\right)\varepsilon\left(E_{s,m} + \E_r\right) +  2C_{res} \varepsilon.
\end{split}
\end{align}

Our next step is to integrate the system of differential inequalities~\eqref{Esmineq} and~\eqref{Erineq}. We introduce
$$ \mathcal{S}_{s}(\tau) = \sup_{\tilde{t} \in [0,\tau]} E_{s,m}(\tilde{t})
\quad \text{and} \quad
 \widetilde{\mathcal{S}}(\tau) = \sup_{\tilde{t} \in [0,\tau]} \E_{r}(\tilde{t}). $$
Integrating and applying Gr\"onwall's inequality to~\eqref{Esmineq}, we arrive at
$$
 \mathcal{S}_{s}(\tau)  \leq C ( \widetilde{\mathcal{S}}(\tau) + \varepsilon), \qquad \tau \in [0,t],
$$
since $W(0) = W_{\rm{\mathrm{an}}}(0)$.
We insert the last inequality in~\eqref{Erineq} and integrate to find
$$
 \widetilde{\mathcal{S}}(t) \leq 2\int_0^t \left(\left(d_{\Lambda,0} + 2\right)\varepsilon(C+1)\left(\widetilde{\mathcal{S}}(\tau) + \varepsilon\right) +  2C_{res} \varepsilon\right)  d \tau.
$$
Gr\"onwall's inequality finally gives bounds $ M_s, M_c > 0$, which are independent of $\varepsilon \in \min\{\epsilon_1, \epsilon_2, d_{\Lambda,0}/4(1+C_{res}\}$, $M$ and $t$ as long as $t \in (0,T_1/\epsilon]$, such that
\begin{align} \label{finale}
\mathcal{S}_{s}(t) \leq M_s \qquad \textrm{and} \qquad  \widetilde{\mathcal{S}}(t) \leq M_c.
\end{align}

We set $M = (M_s+M_c)^{1/2}$ and apply continuous induction to~\eqref{bounded} using~\eqref{finale}. We conclude that the error remains bounded (and thus exists, cf.~Remark~\ref{rem:local}) for all $t \in (0,T_{1,\veps}/\veps]$.
Thus, we establish~\eqref{regV}, because~\eqref{locexappr},~\eqref{regR} and the embeddings $G_{\sigma(t)}^m \hookrightarrow H^m(\R)$ and $\smash{G_{\sigma(t)}^{m+1/2} \hookrightarrow H^{m+2}(\R)}$ hold for all $t \in (0,T_1/\veps]$.
Finally, upon recalling the prefactor $\veps^\kappa$ in~\eqref{deferror}, we arrive at the main error estimate~\eqref{mr2} by the fact that~\eqref{bounded} and the embedding $G_{\sigma(t)}^m \hookrightarrow L^\infty(\R)$ hold for all for $t \in (0,T_1/\veps]$.
All in all, this proves our main result, Theorem~\ref{mainresult1}.
\qed

\begin{remark}{\upshape
In the above proof of Theorem~\ref{mainresult1}, we obtained a system of differential inequalities
\begin{align*}
 \frac12 \frac{d}{dt}
 \begin{pmatrix}
  E_{s,m} \\ \E_{r}
 \end{pmatrix}
 &\leq
\mathcal{A}
 \begin{pmatrix}
  E_{s,m} \\ \E_{r}
 \end{pmatrix}
 +
 C_{res} \varepsilon
 \begin{pmatrix}
  1 \\ 2
 \end{pmatrix}, \qquad \mathcal{A} := \begin{pmatrix}
  -\frac{d_{\Lambda,0}}{2} & \frac{d_{\Lambda,0}}{2} C_{cp} \\
  \left(d_{\Lambda,0} + C_{res}\right)\varepsilon & \left(d_{\Lambda,0} + C_{res}\right)\varepsilon
 \end{pmatrix},
\end{align*}
where the inequality $\leq$ should be understood componentwise. We emphasize that the conclusion~\eqref{finale} follows by the properties of the matrix $\mathcal A$. Indeed, the componentwise order in $\R^2$ is preserved by the associated matrix exponential $e^{\mathcal At}$. Hence, integrating the system and observing that the eigenvalues of $\mathcal{A}$ are bounded from above by $C \epsilon$, the estimate~\eqref{finale} readily follows.
}\end{remark}

\begin{remark} \label{rem:local} {\upshape
Local existence of the exact solution $W$ to the semilinear parabolic system~\eqref{webex1} in time-independent Gevrey spaces immediately transfers to a time-dependent scale of Gevrey spaces. In particular, there exists a maximal $t_\veps \in (0,\infty]$ and a solution
\begin{align}W \in C^1\big((0,t],G^m_{\sigma(t)}\big) \cap C\big([0,t],G^{m+2}_{\sigma(t)}\big), \qquad t \in (0,t_\veps),\label{locpar}
\end{align}
to~\eqref{webex1} with $W(0) = W_{\rm an}(0)$. If $t_{\veps} < \infty$, then it must hold $$\sup_{t \in [0, t_{\veps})} \|W(t)\|_{G^m_{\sigma(t)}} = \infty.$$ Indeed, if not, then, by the local theory in time-independent Gevrey spaces, there would exist a $\tau > 0$, which is independent of $t$,  such that for any $t \in [0,t_{\veps})$ there is a solution
$$W_t \in C^1\big((t,t + \tau],G^m_{\sigma(t)}\big) \cap C\big((t,t+\tau],G^{m+2}_{\sigma(t)}\big),$$
to~\eqref{webex1} with $W_t(t) = W(t)$. So, in particular, the solution $W$ to~\eqref{webex1} could be extended such that~\eqref{locpar} would hold for $t \in [0,t_\veps + \tau/2)$ contradicting the maximality of $t_\veps$.

Combining the latter with~\eqref{deferror} and~\eqref{locexappr}, it follows that there exists $t_{1,\veps} \in (0,T_1/\veps]$ such that~\eqref{regR} holds for all $t \in (0,t_{1,\veps})$. If $t_{1,\veps} < T_1/\veps$, then we must have
\begin{align*}\sup_{t \in [0,t_{1,\veps})}\|R(\tau)\|_{G^m_{\sigma(t)}} = \infty.\end{align*}
Thus, the error $R(t)$ exists in the relevant time-dependent scale of Gevrey spaces for $t \in (0,T_1/\veps]$ as long as $\|R(t)\|_{G^m_{\sigma(t)}}$ remains bounded.
}\end{remark}

\bibliographystyle{alpha}
\bibliography{GLbib}
\small

\end{document}